\newtheorem{thm}{Theorem}[section]
\newtheorem*{thm*}{Theorem}
\newtheorem{lem}[thm]{Lemma}
\newtheorem*{lem*}{Lemma}
\newtheorem{cor}[thm]{Corollary}
\newtheorem{claim}[thm]{Claim}
\newtheorem{prop}[thm]{Proposition}
\theoremstyle{definition}
\newtheorem{assump}[thm]{Assumption}
\renewcommand{\thecase}{}
\newtheorem*{case*}{Case}
\newtheorem*{defn*}{Definition}
\newtheorem*{exmp*}{Example}
\newtheorem{rmk}[thm]{Remark}
\newtheorem*{rmk*}{Remark}
\newtheorem{step}{Step}\renewcommand{\thestep}{}
\theoremstyle{remark}
\def\alphenumi{
  \def\theenumi{\alph{enumi}}
  \def\p@enumi{\theenumi}
  \def\labelenumi{(\@alph\c@enumi)}}
\def\thecase{\@arabic\c@case}
\def\thestep{\@arabic\c@step}
\def\hhmm{\number\hh:\ifnum\mm<10{}0\fi\number\mm}
\let\oldmarginpar\marginpar
\renewcommand\marginpar[1]{\-\oldmarginpar[\raggedleft\footnotesize #1]%
{\raggedright\footnotesize #1}}
\newcommand\EE{\mathbb{E}}
\newcommand\NN{\mathbb{N}}
\newcommand\PP{\mathbb{P}}
\newcommand\RR{\mathbb{R}}
\newcommand\cB{{\mathcal{B}}}
\newcommand\cF{{\mathcal{F}}}
\newcommand\cL{{\mathcal{L}}}
\newcommand\eps{\varepsilon}
\newcommand\diag{\operatorname{diag}}
\newcommand\dist{\operatorname{dist}}
\newcommand\loc{\operatorname{loc}}
\newcommand\supp{\operatorname{supp}}
\numberwithin{equation}{section}
\begin{document}

\title[Kimura diffusions]{Existence, uniqueness and the strong Markov property of solutions to Kimura diffusions with singular drift}

\author[C. Pop]{Camelia A. Pop}
\address[CP]{Department of Mathematics, University of Pennsylvania, 209 South 33rd Street, Philadelphia, PA 19104-6395}
\email{cpop@math.upenn.edu}

\date{\today{ }\hhmm}

\begin{abstract}
Motivated by applications to proving regularity of solutions to degenerate parabolic equations arising in population genetics, we study existence, uniqueness and the strong Markov property of weak solutions to a class of degenerate stochastic differential equations. The stochastic differential equations considered in our article admit solutions supported in the set 
$[0,\infty)^n\times\RR^m$, and they are degenerate in the sense that the diffusion matrix is not strictly elliptic, as the smallest eigenvalue converges to zero proportional to the distance to the boundary of the domain, and the drift coefficients are allowed to have power-type singularities in a neighborhood of the boundary of the domain. Under suitable regularity assumptions on the coefficients, we establish existence of weak solutions that satisfy the strong Markov property, and uniqueness in law in the class of Markov processes.
\end{abstract}

% AMS 2010 subject classifications (used in AMS journals)
% Primary
% 35H99   Close-to-elliptic equations and systems 
% 35J70  	Degenerate elliptic equations
% 35J86  	Linear elliptic unilateral problems and linear elliptic variational inequalities
% 49J40  	Variational methods including variational inequalities
% 35R45  	Partial differential inequalities
%
% Secondary
% 60G22  	Fractional processes, including fractional Brownian motion
% 35R35  	Free boundary problems
% 60J60  	Diffusion processes
% 49J20  	Optimal control problems involving partial differential equations

%TODO: Review
\subjclass[2010]{Primary 60J60; secondary 35J70}
%TODO: Review
% AMS keywords (used in AMS journals)
\keywords{Kimura diffusions, singular drift coefficient, degenerate diffusions, degenerate elliptic operators, the strong Markov property, anisotropic H\"older spaces}

% Acknowledge support
%\thanks{}

\maketitle

\tableofcontents

\section{Introduction}
\label{sec:Introduction}
The stochastic differential equations considered in our article are a generalization of continuous limits of Markov chains that arise in population genetics as random models for the evolution of gene frequencies. The solutions to such differential equations are supported in $\bar S_{n,m}$, where $S_{n,m}:=\RR_+^n\times\RR^m$, $\RR_+:=(0,\infty)$, and $n$ and $m$ are nonnegative integers such that $n+m\geq 1$. Under suitable regularity assumptions on the coefficients of the stochastic differential equation, we prove existence of weak solutions that satisfy the strong Markov property (Theorem \ref{thm:Existence_SDE_singular}), and we establish that uniqueness in law holds in the class of Markov processes (Theorem \ref{thm:Uniqueness_SDE_singular}). The stochastic differential equations considered in our article take the form:
\begin{equation}
\label{eq:Kimura_SDE_singular}
\begin{aligned}
d X_i(t) &= \left(b_i( Z(t))+ \sqrt{X_i(t)}\sum_{j=1}^{n}f_{ij}( Z(t))h_{ij}(X_j(t))\right)\, dt\\
&\quad+\sqrt{ X_i(t)}\sum_{k=1}^{n+m} \sigma_{ik}( Z(t))\, d W_k(t),\\
d Y_l(t) &= \left(e_l( Z(t))+\sum_{j=1}^{n}f_{n+l,j}( Z(t)) h_{n+l,j}(X_j(t))\right)\, dt+\sum_{k=1}^{n+m} \sigma_{n+l,k}( Z(t))\, d W_k(t),
\end{aligned}
\end{equation}
where $i=1,\ldots,n$ and $l=1,\ldots,m$. The important features of the coefficients of the stochastic differential equation \eqref{eq:Kimura_SDE_singular} are that the diffusion matrix is not strictly elliptic on $S_{n,m}$, in that the smallest eigenvalue converges to $0$ proportional to the distance to the boundary of the domain $S_{n,m}$, the components $h_{ij}(x_j)$ of the drift coefficient are allowed to have power-type singularities of the form $|x_j|^{-q}$, where the positive constant $q$ is suitably chosen, and the coefficient functions $b(z)$ are assumed to be bounded from below by a positive constant on the boundary of the domain $S_{n,m}$. When the coefficients $f_{ij}\equiv 0$, then we only require that the coefficients $b(z)$ are nonnegative on $\partial S_{n,m}$. While the coefficients $f(z)$ and $h(z)$ are assumed to be Borel measurable, the coefficients $b(z)$, $e(z)$ and $(\sigma\sigma^*)(z)$ are assumed to belong to suitable anisotropic H\"older spaces. A precise statement of the properties of the coefficients of the stochastic differential equation \eqref{eq:Kimura_SDE_singular} is given in Assumption \ref{assump:Existence_SDE_singular}, and the definition of the anisotropic H\"older spaces considered in our work is given in \S \ref{subsec:Holder_spaces}.

The stochastic differential equations \eqref{eq:Kimura_SDE_singular} are an extension of continuous processes that arise as continuous limits of discrete models for gene frequencies \cite{Fisher_1922, Wright_1931, Haldane_1932,  Kimura_1957, Kimura_1964, Shimakura_1981, Ethier_Kurtz, KarlinTaylor2}, and we call them generalized Kimura stochastic differential equations with singular drift. When the coefficients $f_{ij}\equiv 0$, the singular drift disappears, and we call the resulting equations standard Kimura stochastic differential equations. 

\subsection{Outline of the article}
\label{subsec:Outline}
We begin in \S \ref{sec:Kimura_SDE} with the analysis of the standard Kimura stochastic differential equation, \eqref{eq:Kimura_SDE}. Existence of solutions (Proposition \ref{prop:Existence_SDE}) is an immediate consequence of classical results, and for this purpose the assumptions on the coefficients are more general, as outlined in Assumption \ref{assump:Existence_SDE}. We establish uniqueness in law of solutions to the standard Kimura stochastic differential equation in Proposition \ref{prop:Uniqueness_SDE}, under the more restrictive  Assumption \ref{assump:Uniqueness_SDE}. Notice that the drift coefficients $b(z)$ are only assumed to be nonnegative on the boundary of the domain $S_{n,m}$, and that the coefficient functions $b(z)$, $e(z)$ and a suitable combination of the coefficients of the diffusion matrix are assumed to belong to the anisotropic H\"older spaces introduced in \S \ref{subsec:Holder_spaces}. This condition arises because our method of proof is based on the existence, uniqueness and regularity of solutions in anisotropic H\"older spaces to the homogeneous initial-value problem,
\begin{equation}
\label{eq:Initial_value_problem_widehatL}
\begin{aligned}
u_t-\widehat Lu &= 0\quad\hbox{on } (0,T)\times S_{n,m},\\
u(0,\cdot)&=f\quad\hbox{on } S_{n,m},
\end{aligned}
\end{equation}
where the operator $\widehat L$ is the generator of standard Kimura diffusions. Regularity of solutions to parabolic equations defined by the infinitesimal generator of standard Kimura diffusions are established in \cite{Epstein_Mazzeo_2010, Epstein_Mazzeo_annmathstudies, Pop_2013b}. Our definition of the anisotropic H\"older spaces in \S \ref{subsec:Holder_spaces} are an adaptation to our framework of the H\"older spaces introduced in \cite[Chapter 5]{Epstein_Mazzeo_annmathstudies}.

In \S \ref{sec:Kimura_SDE_singular}, we prove our main results (Theorems \ref{thm:Existence_SDE_singular} and \ref{thm:Uniqueness_SDE_singular}) concerning the existence and uniqueness in law of weak solutions to the singular Kimura stochastic differential equation, \eqref{eq:Kimura_SDE_singular}. Our method of the proof consists in applying Girsanov's Theorem \cite[Theorem 3.5.1]{KaratzasShreve1991} to the weak solutions of the standard Kimura stochastic differential equation, \eqref{eq:Kimura_SDE}, to change the probability distribution so that, under the new measure, the solutions solve the singular Kimura stochastic differential equation, \eqref{eq:Kimura_SDE_singular}. We justify the application of Girsanov's Theorem by proving that Novikov's condition \cite[Corollary 3.5.13]{KaratzasShreve1991} holds, a fact that uses the Markov property of the processes we consider. Because Girsanov's Theorem is also used in the proof of uniqueness in law of weak solutions, our uniqueness result is established in the class of Markov processes. While this result is sufficient for the applications we have in mind (see \S \ref{subsec:Applications}), employing ideas used to prove \cite[Theorem 12.2.4]{Stroock_Varadhan}, it may be possible to prove that uniqueness in the class of Markov processes implies weak uniqueness. Notice  though that \cite[Theorem 12.2.4]{Stroock_Varadhan} does not apply directly to our framework because our drift coefficients are not necessarily bounded (see condition \eqref{eq:Singularity_h}). When the drift coefficients are bounded, that is, we consider the standard Kimura stochastic differential equation \eqref{eq:Kimura_SDE}, then we establish the weak uniqueness of solutions (Proposition \ref{prop:Uniqueness_SDE}).

To prove existence and uniqueness of weak solutions to the singular Kimura stochastic differential equation, \eqref{eq:Kimura_SDE_singular}, we assume that the drift coefficient functions $b(z)$ are bounded from below on $\partial S_{n,m}$ by a positive constant, $b_0$ (see condition \eqref{eq:Positivity_cond}). This is a crucial ingredient in our verification of Novikov's condition in Lemmas \ref{lem:Novikov_cond} and \ref{lem:Novikov_cond_singular}. Notice also that the singular coefficients $h_{ij}(x_j)$ are assumed to satisfy the growth assumption \eqref{eq:Singularity_h}, where $q\in (0,q_0)$ and the positive constant $q_0$ depends on $b_0$, by  identity \eqref{eq:Choice_q_0}.

\subsection{Applications of the main results}
\label{subsec:Applications}
The motivation to study the \emph{singular} Kimura stochastic differential equation, \eqref{eq:Kimura_SDE_singular}, comes from its application to the proof of the Harnack inequality for nonnegative solutions to the parabolic equation defined by the infinitesimal generator of \emph{standard} Kimura diffusions, which we establish in joint work with Charles Epstein \cite{Epstein_Pop_2013b}. Let $\widehat L$ be the generator of standard Kimura diffusions. Our method of the proof of the Harnack inequality for nonnegative solution to the parabolic equation $u_t-\widehat Lu=0$ consists in employing a stochastic analysis method due to K.-T. Sturm \cite{Sturm_1994}. This makes use of the fact that we already know that the Harnack inequality holds for nonnegative solutions to a parabolic equation $u_t-Lu=0$, where the operator $L$ is a suitable lower order perturbation of the operator $\widehat L$. In 
\cite[\S 4]{Epstein_Mazzeo_cont_est}, C. Epstein and R. Mazzeo show that this is indeed true, when the operator $L$ is chosen to be the infinitesimal generator of \emph{singular} Kimura diffusions which solve equation \eqref{eq:Kimura_SDE_singular}, where the coefficients $h_{ij}(z)$ have the form
$$
h_{ij}(x_j) = \ln x_j \varphi(x_j),\quad\forall\, i=1,\ldots,n+m,\quad\forall\, j=1,\ldots,n,
$$
where $\varphi:\RR\rightarrow[0,1]$ is a compactly supported smooth function. Notice that the preceding form of the coefficients $h_{ij}(z)$ satisfy our growth assumption \eqref{eq:Singularity_h}, and so, Theorems \ref{thm:Existence_SDE_singular} and \ref{thm:Uniqueness_SDE_singular} give us the existence and uniqueness of strong Markov solutions to the singular Kimura stochastic differential equation with logarithmic drift. This together with the strong Markov property of solutions are one of the main ingredients in our proof of the Harnack inequality for nonnegative solution to the parabolic equations defined by the generators of \emph{standard} Kimura diffusions, \eqref{eq:Kimura_SDE}.

\subsection{Comparison with previous research}
\label{subsec:Comparison_previous_research}
Articles which address the questions of existence and uniqueness in law of weak solutions to degenerate stochastic differential equations similar to ours are \cite{Athreya_Barlow_Bass_Perkins_2002, Bass_Perkins_2003}. While the motivation behind the work in \cite{Athreya_Barlow_Bass_Perkins_2002, Bass_Perkins_2003} are applications to superprocesses (\cite[p. 3]{Athreya_Barlow_Bass_Perkins_2002}, \cite[Example 1.4]{Bass_Perkins_2003}), the main application of our results is to the study of diffusions arising in population genetics (\cite{Kimura_1957, Kimura_1964, Shimakura_1981}, \cite[\S 10.1]{Ethier_Kurtz}, \cite[\S 15.2.F]{KarlinTaylor2}), and to the study of regularity of solutions to degenerate parabolic equations (see \S \ref{subsec:Applications}). The main difference between the Kimura stochastic differential equations \eqref{eq:Kimura_SDE} and those considered in \cite{Athreya_Barlow_Bass_Perkins_2002, Bass_Perkins_2003} consist in the fact that we allow coordinates, $\{Y(t)\}_{t\geq 0}$, of the weak solutions whose dispersion coefficients are non-zero on the boundary of the domain $S_{n,m}$, and we do not require the drift coefficients to be bounded; instead we allow singularities in the drift component of the form $|x_i|^{-q}$, for $i=1,\ldots,n$, where the exponent $q$ satisfies a suitable restriction given by inequality \eqref{eq:Singularity_h}. In the sequel, we explain in more detail the differences between the work done in \cite{Athreya_Barlow_Bass_Perkins_2002, Bass_Perkins_2003} and our results.

In \cite{Athreya_Barlow_Bass_Perkins_2002}, the authors consider diffusions corresponding to the generator
$$
\cL u=\sum_{i=1}^n \left(x_i\gamma_i(x)u_{x_ix_i}+b_i(x)u_{x_i}\right),
$$
where $x\in\RR^n_+$ and $u\in C^2(\RR^n_+)$. Under the assumption that the coefficients of the operator $\cL$ are continuous functions on $\bar \RR^n_+$ and that the drift coefficients are positive on $\partial\RR^n_+$, it is proved in \cite{Athreya_Barlow_Bass_Perkins_2002} that the martingale problem associated to the operator $\cL$ has a unique solution. The method of the proof consists in proving $L^2$-estimates for the resolvent operators, employing a method of Krylov and Safonov to establish continuity of the resolvent operators \cite[\S V.7]{Bass_1998}, and a localizing procedure due to Stroock and Varadhan \cite[Theorem 6.6.1]{Stroock_Varadhan} to reduce the existence and uniqueness of solutions to a local problem. In \S \ref{sec:Kimura_SDE}, we recover and extend the results obtained in \cite{Athreya_Barlow_Bass_Perkins_2002}, under the assumption that the coefficients of the operator $\cL$ belong to the anisotropic H\"older spaces introduced in \S \ref{subsec:Holder_spaces}, and we allow the drift coefficient to be $0$ along the boundary of $\RR^n_+$. Moreover, our method of the proof appears to be simpler, as we rely on existence and uniqueness of solutions in anisotropic H\"older spaces to homogeneous initial-value parabolic equations defined by the operator $\cL$. These results were established in \cite{Epstein_Mazzeo_2010, Epstein_Mazzeo_annmathstudies, Epstein_Mazzeo_cont_est_diag, Pop_2013b}.

In \cite{Bass_Perkins_2003}, the authors consider a more general class of generators which are assumed to take the form
$$
\cL u=\sum_{i,j=1}^n \sqrt{x_ix_j}\gamma_{ij}(x)u_{x_ix_i}+\sum_{i=1}^n b_i(x)u_{x_i},
$$
where $x\in\RR^n_+$ and $u\in C^2(\RR^n_+)$. In this work, the coefficient functions $(\gamma(z))$ are $b(z)$ are assumed to belong to suitable weighted H\"older spaces, as opposed to the anisotropic H\"older spaces introduced in \S \ref{subsec:Holder_spaces}, and the drift coefficient $b(z)$ is assumed nonnegative on the boundary of the domain $\RR^n_+$. Using estimates of the semigroup associated to the operator $\cL$ and of the resolvent operators in weighted H\"older spaces, and the localizing procedure of Stroock and Varadhan \cite[Theorem 6.6.1]{Stroock_Varadhan}, the authors prove existence and uniqueness of solutions to the martingale problem associated to $\cL$. Our results are both more general and more restrictive in certain ways, than the ones obtained in \cite{Bass_Perkins_2003}. The smallness condition \cite[Inequality (1.4)]{Bass_Perkins_2003} on the cross-terms $\gamma_{ij}(z)$, for $i\neq j$, of the operator $\cL$ is less restrictive than our analogous condition \eqref{eq:tilde_a} of the matrix $(a(z))$, defined in \eqref{eq:Matrix_a}. On the other hand, we allow non-generate directions, $\{Y(t)\}_{t\geq 0}$, in our stochastic differential equation \eqref{eq:Kimura_SDE_singular}, and we allow singular, unbounded drift coefficients.

\subsection{Notations and conventions}
\label{subsec:Notations_conventions}
Let $F$ be a closed set in $\RR^{n+m}$, and $k$ be a positive integer. We let $C_{\loc}(F;\RR^k)$ denote the set of functions, $f:F\rightarrow \RR^k$, that are continuous on $F$, but are not necessarily bounded. The space $C^{\infty}_c(\bar S_{n,m})$ consists of smooth functions, $f:\bar S_{n,m}\rightarrow\RR$, with compact support in $\bar S_{n,m}$. For a Borel measurable set $U$, we denote by $\cB(U)$ the collection of Borel measurable subsets of $U$.

\subsection{Acknowledgment}
The author would like to thank Charles Epstein for suggesting this problem and for many very helpful discussions on this subject.

\section{Standard Kimura diffusions}
\label{sec:Kimura_SDE}
To establish existence, uniqueness and the strong Markov property of weak solutions to the Kimura stochastic differential equation with singular drift \eqref{eq:Kimura_SDE_singular}, we first prove these results for the standard Kimura diffusions,
\begin{equation}
\label{eq:Kimura_SDE}
\begin{aligned}
d\widehat X_i(t) &= b_i(\widehat Z(t))\, dt+\sqrt{\widehat X_i(t)}\sum_{k=1}^{n+m} \sigma_{ik}(\widehat Z(t))\, d\widehat W_k(t),\quad\forall\, t>0,\\
d\widehat Y_l(t) &= e_l(\widehat Z(t))\, dt+\sum_{k=1}^{n+m} \sigma_{n+l,k}(\widehat Z(t))\, d\widehat W_k(t),\quad\forall\, t>0,
\end{aligned}
\end{equation}
where $i=1,\ldots,n$, and $l=1,\ldots,m$. We denote by $\widehat Z(t)=(\widehat X(t),\widehat Y(t))$, for all $t\geq 0$, the weak solution to the standard Kimura equation \eqref{eq:Kimura_SDE}. We organize this subsection into 
%TODO: Review 'three'
three parts. In \S \ref{subsec:Existence_SDE}, we prove under suitable hypotheses (Assumption \ref{assump:Existence_SDE}) that the standard Kimura stochastic differential equation \eqref{eq:Kimura_SDE} admits weak solutions, $\{\widehat Z(t)\}_{t\geq 0}$, supported in $\bar S_{n,m}$, when the initial condition is assumed to satisfy $\widehat Z(0)\in\bar S_{n,m}$. In \S \ref{subsec:Uniqueness_SDE}, we prove under more restrictive hypotheses (Assumption \ref{assump:Uniqueness_SDE}), that the weak solutions to the Kimura equation \eqref{eq:Kimura_SDE} are unique in law and satisfy the strong Markov property. In \S \ref{subsec:Holder_spaces}, we introduce the definitions of the anisotropic H\"older spaces used in the proof of uniqueness of weak solutions.

\subsection{Existence of weak solutions}
\label{subsec:Existence_SDE}
Existence of solutions to the standard Kimura stochastic differential equation \eqref{eq:Kimura_SDE} can be established for a more general form of the diffusion matrix than the one implied by equations \eqref{eq:Kimura_SDE}. For this reason, we consider the  stochastic differential equation,
\begin{equation}
\label{eq:Kimura_SDE_general}
\begin{aligned}
d\widehat X_i(t) &= b_i(\widehat Z(t))\, dt+\sum_{k=1}^{n+m} \varsigma_{ik}(\widehat Z(t))\, d\widehat W_k(t), \quad\forall\, t>0,\\
d\widehat Y_l(t) &= e_l(\widehat Z(t))\, dt+\sum_{k=1}^{n+m} \varsigma_{n+l,k}(\widehat Z(t))\, d\widehat W_k(t),\quad\forall\, t>0,
\end{aligned}
\end{equation}
where $i=1,\ldots,n$, and $l=1,\ldots,m$. 

To establish existence of weak solutions to the stochastic differential equation \eqref{eq:Kimura_SDE_general}, we need the following

\begin{assump}[Properties of the coefficients in \eqref{eq:Kimura_SDE_general}]
\label{assump:Existence_SDE}
The coefficient functions of the stochastic differential equation \eqref{eq:Kimura_SDE_general} satisfy the properties:
\begin{enumerate}
\item[1.] We assume that $b\in C_{\loc}(\bar S_{n,m};\RR^n)$, $e\in C_{\loc}(\bar S_{n,m};\RR^m)$, and $\varsigma\in C_{\loc}(\bar S_{n,m};\RR^{(n+m)\times (n+m)})$.
\item[2.] The coefficients $b(z)$, $e(z)$ and $(\varsigma(z))$ have at most linear growth in $|z|$.
\item[3.] We assume that
\begin{equation}
\label{eq:Zero_diffusion}
(\varsigma\varsigma^*)_{ii}(z)=0,\quad\forall z\in \left(\partial S_{n,m}\cap\{x_i=0\}\right),\quad\forall\, i=1,\ldots,n,
\end{equation}
where $\varsigma^*$ denotes the transpose matrix of $\varsigma$.
\item[4.] The drift coefficients satisfy
\begin{equation}
\label{eq:Nonnegative_drift}
b_i(z) \geq 0,\quad\forall z\in \left(\partial S_{n,m}\cap\{x_i=0\}\right),\quad\forall\, i=1,\ldots,n.
\end{equation}
\end{enumerate}
\end{assump}

We begin with

\begin{prop}[Existence of weak solutions to standard Kimura diffusions]
\label{prop:Existence_SDE}
Suppose that Assumption \ref{assump:Existence_SDE} holds. Then, for all $z\in\bar S_{n,m}$, there is a weak solution, $(\widehat Z=(\widehat X,\widehat Y),\widehat W)$, on a filtered probability space satisfying the usual conditions, $(\Omega,\{\cF(t)\}_{t\geq 0},\cF,\widehat\PP^{z})$, to the stochastic differential equation \eqref{eq:Kimura_SDE_general}, with initial condition $\widehat Z(0)=z$. Moreover, the weak solution, $\widehat Z=(\widehat X,\widehat Y)$, is supported in $\bar S_{n,m}$.
\end{prop}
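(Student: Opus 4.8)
The plan is to reduce to a standard weak-existence theorem on all of $\RR^{n+m}$ by extending the coefficients, and then to show by an It\^o-type argument that the resulting process never leaves $\bar S_{n,m}$. Concretely, I would extend the coefficients by precomposing with the projection onto $\bar S_{n,m}$: writing $z^+:=(x_1^+,\ldots,x_n^+,y_1,\ldots,y_m)$ with $x_i^+=\max(x_i,0)$, set $\tilde b_i(z):=b_i(z^+)$, $\tilde e_l(z):=e_l(z^+)$, and $\tilde\varsigma_{jk}(z):=\varsigma_{jk}(z^+)$. Since $z\mapsto z^+$ is continuous with $|z^+|\le|z|$, the extended coefficients are continuous on $\RR^{n+m}$, retain the linear growth bound of Assumption \ref{assump:Existence_SDE}(2), and agree with the original coefficients on $\bar S_{n,m}$. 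The crucial observation is that \eqref{eq:Zero_diffusion}, being the vanishing of the sum of squares $\sum_k\varsigma_{ik}^2$, forces the entire $i$-th row of $\varsigma$ to vanish on $\{x_i=0\}$; hence for $x_i\le0$ we have $(z^+)_i=0$ and therefore $\tilde\varsigma_{ik}(z)=0$ for every $k$, so that $(\tilde\varsigma\tilde\varsigma^*)_{ii}(z)=0$. Likewise \eqref{eq:Nonnegative_drift} gives $\tilde b_i(z)=b_i(z^+)\ge0$ whenever $x_i\le0$.

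Next I would invoke a classical weak-existence theorem for stochastic differential equations with continuous coefficients of at most linear growth — Skorokhod's theorem, or equivalently the Stroock--Varadhan martingale-problem construction — to obtain, for each $z\in\bar S_{n,m}$, a weak solution $(\widehat Z,\widehat W)$ on a filtered probability space $(\Omega,\{\cF(t)\}_{t\ge0},\cF,\widehat\PP^{z})$ to the extended equation with $\widehat Z(0)=z$. The linear growth bound yields the usual uniform second-moment estimate $\widehat\EE^{z}[\sup_{s\le t}|\widehat Z(s)|^2]\le C(t)(1+|z|^2)$ via Gronwall's inequality, so no explosion occurs and the stochastic integrals below become genuine (not merely local) martingales after localizing by $\tau_R:=\inf\{t:|\widehat Z(t)|\ge R\}$ and letting $R\to\infty$.

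It remains to prove that $\widehat Z$ is supported in $\bar S_{n,m}$, i.e.\ that $\widehat X_i(t)\ge0$ for all $t\ge0$ and all $i$, almost surely. Fixing $i$ and applying It\^o's formula to $\phi(\widehat X_i(t))$, where $\phi(x):=(x^-)^2$ (approximated by smooth convex functions with uniformly bounded second derivatives to avoid a local-time term, then passing to the limit), gives
\begin{equation*}
\widehat\EE^{z}\big[\phi(\widehat X_i(t\wedge\tau_R))\big]
=\phi(z_i)+\widehat\EE^{z}\!\int_0^{t\wedge\tau_R}\Big(-2\,\widehat X_i^-\,\tilde b_i(\widehat Z)+\bone_{\{\widehat X_i<0\}}\,(\tilde\varsigma\tilde\varsigma^*)_{ii}(\widehat Z)\Big)\,ds.
\end{equation*}
Here $\phi(z_i)=0$ since $z_i\ge0$; on $\{\widehat X_i<0\}$ the first integrand is $\le0$ because $\tilde b_i\ge0$ there, while the second integrand vanishes because $(\tilde\varsigma\tilde\varsigma^*)_{ii}=0$ there. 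Hence $\widehat\EE^{z}[\phi(\widehat X_i(t\wedge\tau_R))]\le0$, and letting $R\to\infty$ forces $\widehat X_i(t)\ge0$ almost surely for each fixed $t$; path-continuity upgrades this to all $t\ge0$ simultaneously. Since $\widehat Z$ then stays in $\bar S_{n,m}$, where $\tilde b=b$, $\tilde e=e$, and $\tilde\varsigma=\varsigma$, the process solves the original equation \eqref{eq:Kimura_SDE_general}.

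I expect the support argument of the last paragraph to be the main obstacle. It is the only place where both boundary hypotheses \eqref{eq:Zero_diffusion} and \eqref{eq:Nonnegative_drift} are genuinely used, and it requires care in choosing the extension so that these conditions persist for $x_i<0$, in justifying the It\^o formula for the non-smooth $\phi$, and in controlling the martingale terms through localization. Everything else is a routine appeal to the classical weak-existence theory.
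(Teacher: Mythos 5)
Your proposal is correct and follows essentially the same route as the paper: extend the coefficients by the nearest-point projection onto $\bar S_{n,m}$ (your $z\mapsto z^+$ is exactly that projection written explicitly), invoke the classical weak-existence theorem for continuous coefficients of linear growth, and then run an It\^o argument showing the process cannot enter $\{x_i<0\}$ using precisely the two boundary hypotheses \eqref{eq:Zero_diffusion} and \eqref{eq:Nonnegative_drift}. The only cosmetic difference is the test function — you use $\phi(x)=(x^-)^2$ where the paper uses a smooth cutoff $\eta_\eps$ supported in $\{x\le 0\}$ — and both work for the same reason (indeed, since $(\tilde\varsigma\tilde\varsigma^*)_{ii}=\sum_k\tilde\varsigma_{ik}^2=0$ on $\{x_i\le 0\}$, the martingale term vanishes identically there, so even your localization is more caution than is strictly needed).
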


\begin{proof}
The method of the proof is similar to that of \cite[Proposition 3.1 and Theorem 3.3]{Feehan_Pop_mimickingdegen_probability}. We divide the proof into two steps. In Step \ref{step:Extended_coeff}, we continuously extend the coefficients of the stochastic differential equation \eqref{eq:Kimura_SDE_general} from $\bar S_{n,m}$ to $\RR^{n+m}$, and we prove that the stochastic differential equation associated to the extended coefficients, \eqref{eq:SDE_extended_coeff}, has a weak solution. In Step \ref{step:Support_solutions}, we prove that any weak solution to equation \eqref{eq:SDE_extended_coeff} is supported $\bar S_{n,m}$, when the support of the initial condition is contained in $\bar S_{n,m}$. Combining Steps \ref{step:Extended_coeff} and \ref{step:Support_solutions}, we obtain the existence of weak solutions supported in $\bar S_{n,m}$, to the stochastic differential equation \eqref{eq:Kimura_SDE_general}.

\setcounter{step}{0}
\begin{step}[Extension of the coefficients]
\label{step:Extended_coeff}
By Assumption \ref{assump:Existence_SDE}, we can extend the coefficients of the stochastic differential equation \eqref{eq:Kimura_SDE_general} by continuity from $\bar S_{n,m}$ to $\RR^{n+m}$. We consider the function $\varphi:\RR^{n+m}\rightarrow \bar S_{n,m}$ defined by
$$
\varphi(z)=z',\hbox{ such that } z'\in\bar S_{n,m} \hbox{ and } |z-z'|=\dist(z,\bar S_{n,m}).
$$
Because $\bar S_{n,m}$ is a closed, convex set, the point $z'\in\bar S_{n,m}$ is uniquely determined for all $z\in\RR^{n+m}$. Moreover $\varphi$ is a continuous function and $\varphi(z)=z$, for all $z \in \bar S_{n,m}$. We define the coefficient functions $\tilde b:=b\circ\varphi$, $\tilde d:=d\circ\varphi$ and $\tilde \varsigma:=\varsigma\circ\varphi$, which are continuous extensions to $\RR^{n+m}$ of the coefficient functions $b$, $d$ and $\sigma$, respectively. By Assumption \ref{assump:Existence_SDE}, the extended coefficients are continuous functions on $\RR^{n+m}$, and have at most linear growth in the spatial variable. By \cite[Theorem 5.4.22]{KaratzasShreve1991}, \cite[Theorem 5.3.10]{Ethier_Kurtz}, it follows that the stochastic differential equation,
\begin{equation}
\label{eq:SDE_extended_coeff}
\begin{aligned}
d\widetilde X_i(t) &= \tilde b_i(\widetilde Z(t))\, dt+\sum_{k=1}^{n+m} \tilde\varsigma_{ik}(\widetilde Z(t))\, d\widetilde W_k(t),\quad\forall\, t>0,\quad\forall\, i=1,\ldots,n\\
d\widetilde Y_l(t) &= \tilde e_l(\widetilde Z(t))\, dt+\sum_{k=1}^{n+m} \tilde \varsigma_{n+l;k}(\widetilde Z(t))\, d\widetilde W_k(t),\quad\forall\, t>0,\quad\forall\, l=1,\ldots,m,
\end{aligned}
\end{equation}
has a weak solution, $\{\widetilde Z(t)=(\widetilde X(t), \widetilde Y(t)), \widetilde W(t)\}_{t\geq 0}$, on a filtered probability space satisfying the usual conditions, $(\Omega,\{\cF(t)\}_{t\geq 0},\cF,\widetilde\PP)$, for any initial condition, $\widetilde Z(0)$.
\end{step}

\begin{step}[Support of weak solutions]
\label{step:Support_solutions}
Let $z \in \bar S_{n,m}$, and let $\{\widetilde Z(t)=(\widetilde X(t), \widetilde Y(t))\}_{t\geq 0}$ be a weak solution to the stochastic differential equation \eqref{eq:SDE_extended_coeff}, with initial condition $\widetilde Z(0)=z$. Our goal is to show that
\begin{equation}
\label{eq:Law_support}
\widetilde\PP^{z}\left(\widetilde Z(t) \in \bar S_{n,m}\right)=1,\quad\forall\, t \geq 0,
\end{equation}
where $\widetilde\PP^{z}$ denotes the probability distribution of the process $\{\widetilde Z(t)\}_{t\geq 0}$, with initial condition $\widetilde Z(0)=z$. To prove identity \eqref{eq:Law_support}, it is sufficient to show that
\begin{equation}
\label{eq:Law_support_i}
\widetilde\PP^{z}\left(\widetilde X_i(t) \geq 0\right)=1,\quad\forall\, t \geq 0,\quad\forall\, i=1,\ldots,n.
\end{equation}
For $\eps>0$, let $\eta_{\eps}:\RR\rightarrow [0,1]$ be a smooth function such that $\eta_{\eps}(s)=1$ for $s\leq-\eps$, $\eta_{\eps}(s)=0$ for $s \geq 0$, and $\eta'_{\eps} \leq 0$ on $\RR$. We see that identity \eqref{eq:Law_support_i} holds, if we show that for all $\eps>0$, we have that
\begin{equation}
\label{eq:Law_support_i_eps}
\widetilde\PP^{z}\left(\eta_{\eps}\left(\widetilde X_i(t)\right) = 0\right)=1,\quad\forall\, t \geq 0,\quad\forall\, i=1,\ldots,n.
\end{equation}
Applying It\^o's rule \cite[Theorem 3.3.6]{KaratzasShreve1991} to the process $\{\eta_{\eps}(\widetilde X_i(t))\}_{t\geq 0}$, we obtain 
\begin{equation}
\label{eq:Ito_rule_X_i}
\EE_{\widetilde\PP^{z}}\left[\eta_{\eps}\left(\widetilde X_i(t)\right)\right] = \eta_{\eps}(z) 
+ \EE_{\widetilde\PP^{z}}\left[\int_0^t \tilde b_i(\widetilde Z(s))\eta'_{\eps}\left(\widetilde X_i(s)\right) + \frac{1}{2} (\tilde\varsigma\tilde\varsigma^*)_{ii}(\widetilde Z(s)) \eta''_{\eps}\left(\widetilde X_i(s)\right) \, ds\right].
\end{equation}
From condition \eqref{eq:Nonnegative_drift} and the construction of the extended coefficient $\tilde b_i$, it follows that the drift coefficient $\tilde b_i(z)$ is nonnegative on the support of the function $\eta'_{\eps}$. Using the fact that $\eta'_{\eps} \leq 0$, we obtain
$$
\tilde b_i(\widetilde Z(s))\eta'_{\eps}\left(\widetilde X_i(s)\right) \leq 0,\quad\forall\, s \in [0,t].
$$
From condition \eqref{eq:Zero_diffusion} and the construction of the extended matrix coefficient $\tilde \varsigma$, it follows that $(\tilde\varsigma\tilde\varsigma^*)_{ii}=0$ on the support of $\eta''_{\eps}$. Thus we have 
$$
(\tilde\varsigma\tilde\varsigma^*)_{ii}(\widetilde Z(s)) \eta''_{\eps}\left(\widetilde X_i(s)\right)=0, \quad\forall\, s \in [0,t].
$$
Using now the fact that $\eta_{\eps}(z)=0$, since we choose $z\in \bar S_{n,m}$ and $\eta_{\eps} \equiv 0$ on $\RR_+$, it follows from identity \eqref{eq:Ito_rule_X_i} that
$$
\EE_{\widetilde\PP^{z}}\left[\eta_{\eps}\left(\widetilde X_i(t)\right)\right] \leq 0,
$$
and, because $\eta_{\eps}$ is a nonnegative function, the preceding expression holds with equality. Since $\eps>0$ was arbitrarily chosen, the preceding identity implies \eqref{eq:Law_support_i}, for all $i=1,\ldots,n$, and so, we conclude that \eqref{eq:Law_support} holds. 
\end{step}
Identity \eqref{eq:Law_support} proves that, when started at points in $\bar S_{n,m}$, the weak solutions to the stochastic differential equation \eqref{eq:SDE_extended_coeff} remain in $\bar S_{n,m}$. Because the coefficients of the stochastic differential equations \eqref{eq:Kimura_SDE} and \eqref{eq:SDE_extended_coeff} agree on $\bar S_{n,m}$, we obtain that the weak solutions to \eqref{eq:SDE_extended_coeff} also solve equation \eqref{eq:Kimura_SDE}. This completes the proof of Proposition \ref{prop:Existence_SDE}.
\end{proof}

\begin{rmk}[Existence of weak solutions to the standard Kimura equation]
\label{rmk:Existence_Kimura_SDE}
We now consider a matrix coefficient, $\sigma :\bar S_{n,m}\rightarrow\RR^{n+m}\times\RR^{n+m}$, such that it satisfies the property that by setting 
\begin{equation}
\label{eq:varsigma}
\begin{aligned}
\varsigma_{ij}(z)&:=\sqrt{x_i}\sigma_{ij}(z),\quad\forall\, i=1,\ldots,n,\quad\forall\, j=1,\ldots,n+m,\\
\varsigma_{ij}(z)&:=\sigma_{ij}(z),\quad\forall\, i=n+l,\ldots,m, \quad\forall\, j=1,\ldots,n+m,
\end{aligned}
\end{equation}
the matrix $(\varsigma(z))$ verifies Assumption \ref{assump:Existence_SDE}. Then Proposition \ref{prop:Existence_SDE} implies that there is a weak solution, $\{\widehat Z(t)\}_{t\geq 0}$, to the standard Kimura stochastic differential equation \eqref{eq:Kimura_SDE}, for any initial condition $\widehat Z(0)$ supported in $\bar S_{n,m}$, and that the solution remains supported in $\bar S_{n,m}$ at all subsequence times.
\end{rmk}

\subsection{Anisotropic H\"older spaces}
\label{subsec:Holder_spaces}
Before we can state the assumptions on the coefficients of the Kimura stochastic differential equation \eqref{eq:Kimura_SDE} that will guarantee the uniqueness in law of weak solutions, we first need to introduce a class of anisotropic H\"older spaces adapted to the degeneracy of the diffusion matrix. The following H\"older spaces are a slight modification of the H\"older spaces introduced by C. Epstein and R. Mazzeo in their study of the existence, uniqueness and regularity of solutions to the parabolic problem defined by Kimura operators \cite{Epstein_Mazzeo_2010, Epstein_Mazzeo_annmathstudies}. 

Following \cite[Chapter 5]{Epstein_Mazzeo_annmathstudies}, we need to first introduce a \emph{distance function}, $\rho$, which takes into account the degeneracy of the diffusion matrix of stochastic differential equation \eqref{eq:Kimura_SDE}. We let
\begin{equation}
\label{eq:rho}
\rho((t^0,z^0),(t,z)) := \rho_0(z^0,z) + \sqrt{|t^0-t|},\quad\forall\, (t^0,z^0), (t,z) \in [0,\infty)\times\bar S_{n,m},
\end{equation}
where $\rho_0$ is a distance function in the spatial variables. Because our domain $S_{n,m}$ is unbounded, as opposed to the compact manifolds considered in \cite{Epstein_Mazzeo_annmathstudies}, the properties of the distance function $\rho_0(z^0,z)$ depend on whether the points $z^0$ and $z$ are in a neighborhood of the boundary of $S_{n,m}$, or far away from the boundary of $S_{n,m}$. For any set of indices, $I\subseteq\{1,\ldots,n\}$, we let
\begin{align}
\label{eq:M_I}
M_I:=\left\{z=(x,y) \in S_{n,m}: x_i \in (0,1)\hbox{ for all } i \in I,\hbox{ and } x_j \in (1,\infty)\hbox{ for all } j \in I^c\right\},
\end{align}
where we denote $I^c:=\{1,\ldots,n\}\backslash I$. The distance function $\rho_0$ has the property that there is a positive constant, $c=c(n,m)$, such that for all subsets $I, J\subseteq\{1,\ldots,n\}$, and all $z^0\in M_I$ and $z\in M_J$, we have that
\begin{equation}
\label{eq:Equivalent_intrinsic_metric}
\begin{aligned}
&c\left(\max_{i\in I\cap J} \left|\sqrt{x^0_i}-\sqrt{x_i}\right| + \max_{j\in (I\cap J)^c} |x^0_j-x_j| +\max_{l\in\{1,\ldots,m\}}|y^0_l-y_l|\right)\\
&\leq \rho(z^0,z)\\
&\leq c^{-1}\left(\max_{i\in I\cap J} \left|\sqrt{x^0_i}-\sqrt{x_i}\right| + \max_{j\in (I\cap J)^c} |x^0_j-x_j| +\max_{l\in\{1,\ldots,m\}}|y^0_l-y_l|\right),
\end{aligned}
\end{equation}
Let $\alpha\in (0,1)$. Following \cite[\S 5.2.4]{Epstein_Mazzeo_annmathstudies}, we let $C^{0,\alpha}_{WF}([0,T]\times \bar S_{n,m})$ be the H\"older space consisting of continuous functions, $u:[0,T]\times\bar S_{n,m}\rightarrow \RR$, such that the following norm is finite
$$
\|u\|_{C^{0,\alpha}_{WF}([0,T]\times \bar S_{n,m})} := \sup_{(t,z)\in [0,T]\times\bar S_{n,m})}|u(t,z)|
+ \sup_{\stackrel{(t^0,z^0), (t,z)\in [0,T]\times \bar S_{n,m}}{(t^0,z^0)\neq (t,z)}} 
\frac{|u(t^0,z^0)-u(t,z)|}{\rho^{\alpha}((t^0,z^0), (t,z))}. 
$$
Let $k$ be a positive integer, and $U$ be a set in $S_{n,m}$. We let $C^{k,\alpha}_{WF}([0,T]\times \bar U)$ denote the H\"older space containing functions, $u\in C^k([0,T]\times\bar U)$, such that the derivatives $D^{\tau}_t D^{\zeta}_z$ belong to the space $C^{0,\alpha}_{WF}([0,T]\times \bar U)$, for all $\tau\in\NN$ and $\zeta\in\NN^{n+m}$, such that $2\tau+|\zeta| \leq k$. We endow the space $C^{k,\alpha}_{WF}([0,T]\times \bar U)$ with the norm,
\begin{align*}
\|u\|_{C^{k,\alpha}_{WF}([0,T]\times \bar U)} &:= \sum_{\stackrel{\tau\in\NN, \zeta\in\NN^{n+m}}{2\tau+|\zeta|\leq k}}
 \|D^{\tau}_t D^{\zeta}_z\|_{C^{0,\alpha}_{WF}([0,T]\times \bar U)}.
\end{align*}
We fix a set of indices, $I\subseteq \{1,\ldots, n\}$. Let $U$ be a set such that $U \subseteq M_{I}$. We let $C^{2+\alpha}_{WF}([0,T]\times \bar U)$ denote the H\"older space of functions, $u\in C^{1,\alpha}_{WF}([0,T]\times\bar U)\cap C^2([0,T]\times U)$, such that 
$$
u_t \in C^{0,\alpha}_{WF}([0,T]\times \bar U),
$$
and such that the functions,
\begin{align*}
\sqrt{x_ix_j}u_{x_ix_j}, \sqrt{x_i}u_{x_iy_l}, u_{y_ly_k} &\in C^{0,\alpha}_{WF}([0,T]\times \bar U),
\quad \forall\, i,j\in I,\quad \forall\, l,k=1,\ldots,m,\\
\sqrt{x_i}u_{x_i x_j}, u_{x_jx_k} &\in C^{0,\alpha}_{WF}([0,T]\times \bar U),\quad\forall\, i \in I, \quad\forall\, j,k \in I^c.
\end{align*}
We endowed the space $C^{2+\alpha}_{WF}([0,T]\times \bar U)$ with the norm,
\begin{align*}
\|u\|_{C^{2+\alpha}_{WF}([0,T]\times \bar U)} &:= \|u\|_{C^{1,\alpha}_{WF}([0,T]\times \bar U)} 
+ \sum_{i,j \in I}\|\sqrt{x_ix_j}u_{x_ix_j}\|_{C^{0,\alpha}_{WF}([0,T]\times \bar U)}
+ \sum_{l,k=1}^m\|u_{y_ly_k}\|_{C^{0,\alpha}_{WF}([0,T]\times \bar U)}\\
&\quad+ \sum_{i \in I} \sum_{j \in I^c}\|\sqrt{x_i}u_{x_ix_j}\|_{C^{0,\alpha}_{WF}([0,T]\times \bar U)}
+ \sum_{i \in I} \sum_{l=1}^m\|\sqrt{x_i}u_{x_iy_l}\|_{C^{0,\alpha}_{WF}([0,T]\times \bar U)}\\
&\quad+ \sum_{i, j \in I^c} \|u_{x_ix_j}\|_{C^{0,\alpha}_{WF}([0,T]\times \bar U)}
+ \sum_{i \in I^c} \sum_{l=1}^m \|u_{x_iy_l}\|_{C^{0,\alpha}_{WF}([0,T]\times \bar U)}
+\|u_t\|_{C^{0,\alpha}_{WF}([0,T]\times \bar U)} .
\end{align*}
We now consider the case when $U$ is an arbitrary set in $S_{n,m}$. Then we let $C^{2+\alpha}_{WF}([0,T]\times \bar U)$ denote the H\"older space consisting of functions $u\in C^2([0,T]\times U)$, satisfying the property that 
$$
u\upharpoonright_{\bar U\cap \bar M_{I}}\in C^{2+\alpha}_{WF}([0,T]\times(\bar U \cap \bar M_I)),\quad\forall\, I\subseteq\{1,\ldots,n\}.
$$
We endow the H\"older space $C^{0,2+\alpha}_{WF}([0,T]\times \bar U)$ with the norm
$$
\|u\|_{C^{2+\alpha}_{WF}([0,T]\times \bar U)} = \sum_{I \subseteq \{1,\ldots,n\}} \|u\|_{C^{2+\alpha}_{WF}([0,T]\times(\bar U \cap \bar M_I))}.
$$
When $k=0$, we write for brevity $C^{\alpha}_{WF}([0,T]\times \bar U)$, instead of $C^{0,\alpha}_{WF}([0,T]\times \bar U)$. The elliptic H\"older spaces $C^{k,\alpha}(\bar U)$ and $C^{2+\alpha}_{WF}(\bar U)$ are defined analogously to their parabolic counterparts, and so, we omit their definitions for brevity.

\subsection{Uniqueness and the strong Markov property}
\label{subsec:Uniqueness_SDE}
Our goal is to prove uniqueness in law and the strong Markov property of weak solutions to the standard Kimura stochastic differential equation \eqref{eq:Kimura_SDE}. By \cite[Proposition 5.4.27]{KaratzasShreve1991}, to prove uniqueness in law of weak solutions to the Kimura stochastic differential equation \eqref{eq:Kimura_SDE}, it is sufficient to establish that for all $z\in \bar S_{n,m}$, any two weak solutions, $\{\widehat Z^i(t)\}_{t\geq 0}$, for $i=1,2$, satisfying the property that $\widehat Z^i(0)=z$, for $i=1,2$, have the same one-dimensional marginal distributions. That is, for all functions $f\in C^{\infty}_c(\bar S_{n,m})$ and $T>0$, we have that
\begin{equation}
\label{eq:Matching_marginals}
\EE_{\widehat\PP^{z}_1}\left[f(\widehat Z^1(T))\right] =  \EE_{\widehat\PP^{z}_2}\left[f(\widehat Z^2(T))\right],
\end{equation}
where $\widehat\PP^{z}_i$ denotes the probability distribution of the process $\{\widehat Z^i(t)\}_{t\geq 0}$, with initial condition $\widehat Z^i(0)=z$, for $i=1,2$. 

Before we give the proof of the uniqueness in law of weak solutions to the Kimura stochastic differential equation in \eqref{eq:Kimura_SDE}, we introduce the differential operator $\widehat L$, which will be the infinitesimal generator of the Markovian solutions to the Kimura stochastic differential equation. We let 
\begin{equation}
\label{eq:Matrix_a}
a(z):=\frac{1}{2}\sigma(z)\sigma^*(z),\quad\forall\, z\in\bar S_{n,m}
\end{equation}
and we define
\begin{equation}
\label{eq:Kimura_operator}
\begin{aligned}
\widehat Lu &= \sum_{i,j=1}^n \sqrt{x_ix_j} a_{ij}(z) u_{x_ix_j} + \sum_{i=1}^n\sum_{l=1}^m \sqrt{x_i} \left(a_{i,n+l}(z)+a_{n+l,i}(z)\right) u_{x_iy_l}\\
&\quad +\sum_{l,k=1}^m a_{n+l,n+k}(z) u_{y_ly_k}
 +\sum_{i=1}^n b_i(z)u_{x_i} +\sum_{l=1}^m e_l(z) u_{y_l}.
\end{aligned}
\end{equation}
for all $z\in S_{n,m}$ and all $u \in C^2(S_{n,m})$. Compare the definition of the Kimura differential operator $\widehat L$ with that of the operator $L$ defined in \cite[Identity (1.1)]{Pop_2013b}. Similarly to \cite[Assumption 3.1]{Pop_2013b},  we will need the following assumptions on the coefficients of the Kimura stochastic differential equation \eqref{eq:Kimura_SDE_singular}.

\begin{assump}[Properties of the coefficients in \eqref{eq:Kimura_SDE}]
\label{assump:Uniqueness_SDE}
The coefficient functions of the stochastic differential equation \eqref{eq:Kimura_SDE_singular} satisfy the properties:
Let $\alpha\in (0,1)$, and assume that
\begin{enumerate}
\item[1.] The coefficient functions $b_i(z)$ satisfy the \emph{nonnegativity} condition \eqref{eq:Nonnegative_drift}, for all $i=1,\ldots,n$.
\item[2.] For all $i,j=1,\ldots,n$ such that $i\neq j$, and all $l=1,\ldots,m$, there are functions, $\alpha_{ii}, \tilde\alpha_{ij}, c_{il}:\bar S_{n,m}\rightarrow\RR$, such that
\begin{equation}
\label{eq:tilde_a}
\begin{aligned}
a_{ij}(z)&=\delta_{ij}\alpha_{ii}(z) +\tilde \alpha_{ij}(z)\sqrt{x_ix_j},\quad\forall\, z=(x,y)\in\bar S_{n,m},\\
a_{i,n+l}(z)=a_{n+l,i}(z)&= \frac{1}{2}c_{il}(z)\sqrt{x_i},\quad\forall\, z=(x,y)\in\bar S_{n,m},
\end{aligned}
\end{equation}
where $\delta_{ij}$ denotes the Kronecker delta symbol.
\item[3.] The \emph{strict ellipticity} condition holds: there is a positive constant, $\lambda$, such that for all sets of indices, $I\subseteq \{1,\ldots,n\}$, for all $z \in \bar M_I$, $\xi\in\RR^n$ and $\eta\in\RR^m$, we have 
\begin{equation}
\label{eq:Uniform_ellipticity}
\begin{aligned}
&\sum_{i\in I} \alpha_{ii}(z)\xi_i^2
 +\sum_{i\in I^c} x_i\alpha_{ii}(z)\xi_i^2
 + \sum_{i,j\in I} \tilde \alpha_{ij}(z)\xi_i\xi_j
 +\sum_{i\in I} \sum_{j\in I^c} x_j(\tilde \alpha_{ij}(z)+\tilde \alpha_{ji}(z))\xi_i\xi_j\\
&+ \sum_{i,j\in I^c} x_ix_j\tilde \alpha_{ij}(z)\xi_i\xi_j
+\sum_{i\in I} \sum_{l=1}^m c_{il}(z)\xi_i\eta_l + \sum_{i\notin I} \sum_{l=1}^m x_ic_{il}(z)\xi_i\eta_l
+\sum_{k,l=1}^m a_{n+k,n+l}(z)\eta_k\eta_l\\
&\quad\geq \lambda\left(|\xi|^2+|\eta|^2\right).
\end{aligned}
\end{equation}
\item[4.] The coefficient functions are \emph{H\"older continuous}: for all sets of indices, $I\subseteq \{1,\ldots,n\}$, and for all $i,i'\in I$, $j,j'\in I^c$ and $l,k=1,\ldots,m$, we have that 
\begin{equation}
\label{eq:Holder_cont}
\begin{aligned}
\alpha_{ii},\,  x_j\alpha_{jj},\, \tilde \alpha_{ii'},\, x_j\tilde \alpha_{ij},\, x_j\tilde \alpha_{ji},\, x_jx_{j'}\tilde \alpha_{jj'} &\in C^{\alpha}_{WF}(\bar M_I),\\
a_{n+k,n+l},\, b_i,\, b_j,\, c_{il},\, x_jc_{jl},\, e_l&\in C^{\alpha}_{WF}(\bar M_I).
\end{aligned}
\end{equation}
\end{enumerate}
\end{assump}

Assumption \ref{assump:Uniqueness_SDE} yield some immediate boundedness conditions on the coefficients of the Kimura stochastic differential equation \eqref{eq:Kimura_SDE}, which we now explain.

\begin{rmk}[Boundedness of the coefficient functions $(b(z))$ and $(\sigma(z))$]
\label{rmk:Boundedness_coeff_SDE}
Assumption \ref{assump:Uniqueness_SDE} implies that there is a positive constant, $K$, such that for all $i=1,\ldots,n$ and all $j,l=1,\ldots,n+m$, we have that
\begin{equation}
\label{eq:Boundedness_coeff_SDE}
|b_i(z)|+|\sigma_{jl}(z)| \leq K,\quad\forall\, z\in\bar S_{n,m}.
\end{equation}
The boundedness of the coefficients $(b(z))$ is obvious from \eqref{eq:Holder_cont} and the definition of the anisotropic H\"older spaces in \S \ref{subsec:Holder_spaces}. The boundedness of the matrix coefficient $(\sigma(z))$ follows from identity \eqref{eq:Matrix_a}, and the fact that the matrix $a(z)$ is bounded, as it is implied by identities \eqref{eq:tilde_a} and condition \eqref{eq:Holder_cont}.
\end{rmk}

\begin{rmk}[Boundedness of the matrix coefficient $(\varsigma(z))$]
\label{rmk:Boundedness_coeff_varsigma}
Assumption \ref{assump:Uniqueness_SDE} yields that the matrix coefficient $(\varsigma(z))$, defined in \eqref{eq:varsigma}, is bounded. To see this, let 
\begin{equation}
\label{eq:Diffusion_matrix}
D(z) := \varsigma(z)\varsigma^*(z),\quad\forall\, z\in \bar S_{n,m},
\end{equation}
be the diffusion matrix of the Kimura stochastic differential equation \eqref{eq:Kimura_SDE}. Using \eqref{eq:varsigma}, \eqref{eq:Matrix_a} and \eqref{eq:tilde_a}, it follows that, for all $i,j=1,\ldots,n$ and all $k,l=1,\ldots,m$, we have that 
\begin{equation}
\label{eq:Matrix_D}
\begin{aligned}
D_{ij}(z) &= 2\left(\delta_{ij} x_i\alpha_{ii}(z)+x_ix_j \tilde\alpha_{ij}(z)\right),\\
D_{i,n+k}(z)=D_{n+k,i}(z) &= x_i c_{ik}(z),\\
D_{n+k,n+l}(z) &=  2a_{n+k,n+l}(z).
\end{aligned}
\end{equation}
Using the boundedness of the coefficients implied by condition \eqref{eq:Holder_cont}, it follows that the coefficient matrix $(D(z))$is bounded, and so, identity \eqref{eq:Diffusion_matrix} implies that the coefficient matrix $(\varsigma(z))$ is also bounded.
\end{rmk}

\begin{rmk}[Structure of the operator $\widehat L$]
\label{rmk:Structure_widehat_L}
Condition \eqref{eq:tilde_a} implies that the differential operator $\widehat L$ takes the form:
\begin{align*}
\widehat Lu&= \sum_{i=1}^n \left(x_i\alpha_{ii}(z)u_{x_ix_i} + b_i(z)u_{x_i}\right) + \sum_{i,j=1}^n x_ix_j \tilde \alpha_{ij}(z)u_{x_ix_j} \\
&\quad+\sum_{i=1}^n\sum_{l=1}^m x_i c_{il}(z)u_{x_iy_l} +\sum_{k,l=1}^m a_{n+k,n+l}(z)u_{y_ky_l}+ \sum_{l=1}^m e_l(z)u_{y_l},
\end{align*}
that is, it has the same structure as the operator $L$ defined in \cite[Identity (1.1)]{Pop_2013b}. Comparing Assumption \ref{assump:Uniqueness_SDE} satisfied by the coefficients of the operator $\widehat L$, with \cite[Assumption 3.1]{Pop_2013b} (with the choice $k=0$) satisfied by the operator $L$ in \cite[Identity (1.1)]{Pop_2013b}, we see that the two operators satisfy the same regularity assumptions. Thus, the properties derived in \cite{Pop_2013b} of the operator $L$, also apply to the Kimura differential operator $\widehat L$. In particular, \cite[Theorem 1.4]{Pop_2013b} holds with $L$ replaced by $\widehat L$.
\end{rmk}

We can now state

\begin{prop}[Uniqueness in law of weak solutions to \eqref{eq:Kimura_SDE}]
\label{prop:Uniqueness_SDE}
Suppose that the coefficients of the standard Kimura stochastic differential equation \eqref{eq:Kimura_SDE} satisfy Assumption \ref{assump:Uniqueness_SDE}. Then, for all $z\in\bar S_{n,m}$, there is a unique weak solution, $(\widehat Z=(\widehat X,\widehat Y),\widehat W)$, $(\Omega,\{\cF(t)\}_{t\geq 0}, \cF,\widehat\PP^{z})$, to the stochastic differential equation \eqref{eq:Kimura_SDE}, satisfying the initial condition $\widehat Z(0)=z$.
\end{prop}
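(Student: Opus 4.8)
The plan is to deduce uniqueness in law from the solvability of the forward parabolic problem \eqref{eq:Initial_value_problem_widehatL}, exploiting the classical duality between weak solutions of the stochastic equation and solutions of the Cauchy problem for its generator. Existence of a weak solution for every initial condition $z\in\bar S_{n,m}$ is already provided by Remark \ref{rmk:Existence_Kimura_SDE}, so only uniqueness in law remains to be proved. By \cite[Proposition 5.4.27]{KaratzasShreve1991}, it suffices to verify the matching of one-dimensional marginals \eqref{eq:Matching_marginals}; that is, for fixed $f\in C^{\infty}_c(\bar S_{n,m})$ and $T>0$, I must show that $\EE_{\widehat\PP^z}[f(\widehat Z(T))]$ takes the same value for every weak solution $\widehat Z$ started at $z$.

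First I would invoke the regularity theory for the Kimura operator. By Remark \ref{rmk:Structure_widehat_L}, the operator $\widehat L$ has the structure and satisfies the regularity hypotheses of the operator $L$ in \cite{Pop_2013b}, so that \cite[Theorem 1.4]{Pop_2013b} supplies a solution $u\in C^{2+\alpha}_{WF}([0,T]\times\bar S_{n,m})$ of the initial-value problem $u_t-\widehat Lu=0$, $u(0,\cdot)=f$, which is moreover bounded on $[0,T]\times\bar S_{n,m}$ by the maximum principle. Setting $v(t,z):=u(T-t,z)$, the function $v$ solves the backward equation $v_t+\widehat Lv=0$ on $(0,T)\times S_{n,m}$ with terminal data $v(T,\cdot)=f$, and lies in the same anisotropic H\"older space.

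The heart of the argument is then an application of It\^o's rule to the process $t\mapsto v(t,\widehat Z(t))$. Because $v\in C^{2+\alpha}_{WF}$, the weighted second derivatives $\sqrt{x_ix_j}\,v_{x_ix_j}$, $\sqrt{x_i}\,v_{x_iy_l}$ and $v_{y_ly_k}$ extend continuously up to $\partial S_{n,m}$, and pairing them with the entries \eqref{eq:Matrix_D} of the diffusion matrix $D=\varsigma\varsigma^*$ shows that the second-order It\^o correction $\tfrac12\sum_{k,l} D_{kl}v_{z_kz_l}$ coincides with the bounded second-order part of $\widehat Lv$. Consequently the It\^o drift equals $(v_t+\widehat Lv)(t,\widehat Z(t))=0$, so $v(t,\widehat Z(t))$ is a local martingale; since $v$ is bounded, it is a genuine martingale, and equating expectations at times $0$ and $T$ gives $\EE_{\widehat\PP^z}[f(\widehat Z(T))]=\EE_{\widehat\PP^z}[v(T,\widehat Z(T))]=v(0,z)=u(T,z)$. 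As the right-hand side depends only on $\widehat L$, $f$, $T$ and $z$, and not on the chosen solution, identity \eqref{eq:Matching_marginals} follows, and with it uniqueness in law.

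The main obstacle is the rigorous application of It\^o's rule, since $v$ is only classically $C^2$ in the interior $S_{n,m}$: the pure second derivatives $v_{x_ix_i}$ may blow up like $1/x_i$ as $x_i\to 0$, precisely where the weak solution $\widehat Z$ can reach the boundary. I would handle this by localizing with the stopping times $\tau_{\delta}=\inf\{t:\dist(\widehat Z(t),\partial S_{n,m})\leq\delta \text{ or } |\widehat Z(t)|\geq 1/\delta\}$, applying the classical It\^o rule on $[0,\tau_{\delta}\wedge T]$ where $v$ is smooth, and then letting $\delta\downarrow 0$. The limit is controlled because the coefficients $x_i\alpha_{ii}$, $x_ix_j\tilde\alpha_{ij}$ and $x_ic_{il}$ appearing in $D$ vanish at exactly the rate that cancels the blow-up of the derivatives, so that $\widehat Lv$ stays bounded uniformly up to the boundary; dominated convergence, together with the boundedness of $v$ and of the coefficients (Remarks \ref{rmk:Boundedness_coeff_SDE} and \ref{rmk:Boundedness_coeff_varsigma}), then removes the localization and yields the desired martingale property.
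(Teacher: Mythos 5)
Your overall strategy is exactly the paper's: reduce to matching one--dimensional marginals via \cite[Proposition 5.4.27]{KaratzasShreve1991}, solve the Cauchy problem \eqref{eq:Initial_value_problem_widehatL} in $C^{2+\alpha}_{WF}([0,T]\times\bar S_{n,m})$ using Remark \ref{rmk:Structure_widehat_L} and \cite[Theorem 1.4]{Pop_2013b}, apply It\^o's rule to $u(T-t,\widehat Z(t))$, kill the drift term, and read off $\EE_{\widehat\PP^z}[f(\widehat Z(T))]=u(T,z)$. (Your bookkeeping $v(0,z)=u(T,z)$ is in fact the correct one; the paper writes $u(0,z)$ at this point, which is a slip.) The one place where you and the paper diverge is the justification of It\^o's rule for functions that are only $C^{2+\alpha}_{WF}$: the paper outsources this to the arguments of \cite[Proposition 3.5]{Feehan_Pop_mimickingdegen_probability}, whereas you attempt a self-contained localization.

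That localization, as written, has a genuine gap. With $\tau_{\delta}=\inf\{t:\dist(\widehat Z(t),\partial S_{n,m})\leq\delta \hbox{ or } |\widehat Z(t)|\geq 1/\delta\}$, the limit as $\delta\downarrow 0$ is the first hitting time $\tau_0$ of $\partial S_{n,m}$ (the escape-to-infinity part is harmless by linear growth), and under Assumption \ref{assump:Uniqueness_SDE} the boundary is in general accessible: only $b_i\geq 0$ is required on $\{x_i=0\}$, and one-dimensional CIR-type comparisons show $\PP^z(\tau_0<T)>0$ whenever the drift is small relative to $\alpha_{ii}$. Hence $\tau_{\delta}\wedge T$ converges to $\tau_0\wedge T$, not to $T$, and dominated convergence only yields $\EE[v(\tau_0\wedge T,\widehat Z(\tau_0\wedge T))]=v(0,z)$; it does not ``remove the localization.'' The boundedness of $\widehat Lv$ up to the boundary, which you correctly observe, controls the integrand but not the domain of integration. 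To close the argument one must instead work on all of $[0,T]$ with a smooth approximation of $v$ (e.g.\ shifting $x\mapsto x+\eps$ or mollifying), apply the classical It\^o rule to the approximation, and show the error terms vanish using the degeneracy of $D$ in \eqref{eq:Matrix_D} near $\partial S_{n,m}$ together with an occupation-time estimate for the boundary --- which is precisely the content of the cited \cite[Proposition 3.5]{Feehan_Pop_mimickingdegen_probability}. With that substitution your proof coincides with the paper's.
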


\begin{proof}
The method of the proof is similar to that of \cite[Theorem 1.3 and Proposition 3.6]{Feehan_Pop_mimickingdegen_probability}.
As stated at the beginning of \S \ref{subsec:Uniqueness_SDE}, it suffices to show that for all function $f\in C^{\infty}_c(\bar S_{n,m})$, $z\in \bar S_{n,m}$ and $T>0$, if $(\widehat Z^i, \widehat W^i)$, for $i=1,2$, are two weak solutions to the stochastic differential equation \eqref{eq:Kimura_SDE} with initial condition $\widehat Z^i(0)=z$, then identity \eqref{eq:Matching_marginals} holds. Using Remark \ref{rmk:Structure_widehat_L}, we may apply \cite[Theorem 1.4]{Pop_2013b} to the operator $\widehat L$, to conclude that the homogeneous initial-value problem \eqref{eq:Initial_value_problem_widehatL} has a unique solution, $u \in C^{2+\alpha}_{WF}([0,T]\times\bar S_{n,m})$. We now want to apply It\^o's rule \cite[Theorem 3.3.6]{KaratzasShreve1991} to the process $\{u(T-t, \widehat Z^i(t))\}_{t\in [0,T]}$, but we need to be careful because the function $u$ belongs to the H\"older space of functions $C^{2+\alpha}_{WF}([0,T]\times\bar S_{n,m})$, and so, it is not a $C^{1,2}$ function on $[0,T]\times\bar S_{n,m}$. Using the arguments employed to prove \cite[Proposition 3.5]{Feehan_Pop_mimickingdegen_probability}, we can show that It\^o's rule applies to functions $u \in C^{2+\alpha}_{WF}([0,T]\times\bar S_{n,m})$ and solutions to the Kimura stochastic differential equation \eqref{eq:Kimura_SDE}. In particular, the arguments of \cite[Proposition 3.5]{Feehan_Pop_mimickingdegen_probability} immediately give that
\begin{equation}
\label{eq:Ito_with_varsigma}
\begin{aligned}
d u(T-t,\widehat Z^i(t)) &=-(u_t+ \widehat Lu)(T-t,\widehat Z^i(t))\, dt\\
&\quad+ \sum_{j=1}^n\sum_{k=1}^{n+m}\varsigma_{jk}(\widehat Z^i(t))u_{x_j}(T-t,\widehat Z^i(t))\, d\widehat W^i_k(t)\\
&\quad+ \sum_{l=1}^m\sum_{k=1}^{n+m}\varsigma_{n+l,k}(\widehat Z^i(t))u_{y_l}(T-t,\widehat Z^i(t))\, d\widehat W^i_k(t),\quad\forall\, i=1,2,
\end{aligned}
\end{equation}
where we recall the definition of the coefficient matrix $(\varsigma(z))$ in \eqref{eq:varsigma}. The $d\widehat W^i_j(t)$-terms in the preceding identity define martingales because the coefficient matrix $(\varsigma(z))$ is bounded, by Remark \ref{rmk:Boundedness_coeff_varsigma}, and the derivatives $u_{x_j}$ and $u_{y_l}$ are bounded functions on $[0,T]\times\bar S_{n,m}$, by the definition of the anisotropic H\"older space $C^{2+\alpha}_{WF}([0,T]\times\bar S_{n,m})$ in \S \ref{subsec:Holder_spaces}. Combining the preceding observation with the fact that $u$ is a solution to the initial-value problem \eqref{eq:Initial_value_problem_widehatL}, we obtain from identity \eqref{eq:Ito_with_varsigma} that
\begin{equation*}
u(0,z)=\EE_{\PP^{z}_i}\left[f(Z^i(T))\right],\quad\forall z\in\bar S_{n,m},\quad \forall\, i=1,2.
\end{equation*}
In particular, identity \eqref{eq:Matching_marginals} holds, which implies by \cite[Proposition 5.4.27]{KaratzasShreve1991}, that uniqueness in law holds for solutions to the Kimura stochastic differential equation \eqref{eq:Kimura_SDE}. This completes the proof.
\end{proof}

From \cite[Theorem 5.4.20]{KaratzasShreve1991}, we obtain that uniqueness in law of weak solutions to the Kimura stochastic differential equation \eqref{eq:Kimura_SDE} implies that the solutions satisfy the strong Markov property. Thus we have the following corollary to Proposition \ref{prop:Uniqueness_SDE}

\begin{cor}[The strong Markov property]
\label{cor:Strong_Markov}
Suppose that the coefficients of the standard Kimura stochastic differential equation \eqref{eq:Kimura_SDE} satisfy Assumption \ref{assump:Uniqueness_SDE}. For $z\in\bar S_{n,m}$, let $\{\widehat Z(t)\}_{t\geq 0}$ be the unique weak solution to the stochastic differential equation \eqref{eq:Kimura_SDE}, with initial condition $\widehat Z(0)=z$. Then the process $\{\widehat Z(t)\}_{t\geq 0}$ satisfies the strong Markov property.
\end{cor}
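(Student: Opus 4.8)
The plan is to deduce the strong Markov property directly from the uniqueness in law established in Proposition \ref{prop:Uniqueness_SDE}, using the general principle that weak uniqueness for a stochastic differential equation forces the family of solution laws to be strong Markov. Concretely, I would invoke \cite[Theorem 5.4.20]{KaratzasShreve1991}, which asserts precisely this: if for every starting point the equation has a solution that is unique in the sense of the probability law, then the collection $\{\widehat\PP^{z}\}_{z\in\bar S_{n,m}}$ constitutes a strong Markov family. Thus the substance of the corollary is entirely contained in Proposition \ref{prop:Uniqueness_SDE}, and the proof reduces to verifying that the hypotheses of the cited theorem are met.

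To apply that theorem one must check its standing hypotheses, namely that the drift and dispersion coefficients are Borel measurable and bounded. Borel measurability is immediate from Assumption \ref{assump:Uniqueness_SDE}, while boundedness of $b$, $e$ and of the dispersion matrix $\varsigma$ is exactly what Remarks \ref{rmk:Boundedness_coeff_SDE} and \ref{rmk:Boundedness_coeff_varsigma} supply. One should also note that the measurability of the map $z\mapsto\widehat\PP^{z}$, which underlies the strong Markov conclusion, is itself a consequence of uniqueness in law via the measurable-selection argument built into \cite[Theorem 5.4.20]{KaratzasShreve1991}, so no further input is required on that front.

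The one genuine point to address is that \cite[Theorem 5.4.20]{KaratzasShreve1991} is formulated for equations posed on all of $\RR^{n+m}$, whereas our solutions are supported in the closed region $\bar S_{n,m}$. I would resolve this by passing to the globally defined extended coefficients $\tilde b$, $\tilde e$, $\tilde\varsigma$ constructed in Step \ref{step:Extended_coeff} of the proof of Proposition \ref{prop:Existence_SDE}. For any initial point $z\in\bar S_{n,m}$, Step \ref{step:Support_solutions} guarantees that every weak solution of the extended equation \eqref{eq:SDE_extended_coeff} remains in $\bar S_{n,m}$, where the extended coefficients coincide with those of \eqref{eq:Kimura_SDE}; hence uniqueness in law for \eqref{eq:Kimura_SDE} yields uniqueness in law for \eqref{eq:SDE_extended_coeff} among solutions started in $\bar S_{n,m}$. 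Applying \cite[Theorem 5.4.20]{KaratzasShreve1991} to the extended, globally bounded and measurable system then produces a strong Markov family, and restricting to initial conditions $z\in\bar S_{n,m}$ gives the strong Markov property for $\{\widehat Z(t)\}_{t\geq 0}$.

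The main obstacle I anticipate is precisely this reconciliation of the domain: one must confirm that restricting attention to $\bar S_{n,m}$, together with the degeneracy of the diffusion along $\partial S_{n,m}$, does not disturb the regular-conditional-probability argument underlying the strong Markov conclusion. Since the extended system is honestly defined on $\RR^{n+m}$ with bounded measurable coefficients, and the invariance of $\bar S_{n,m}$ is already in hand from Proposition \ref{prop:Existence_SDE}, I expect the standard machinery of \cite{KaratzasShreve1991} to apply verbatim, so the remaining effort is bookkeeping rather than a new idea.
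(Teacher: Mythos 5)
Your proposal is correct and follows essentially the same route as the paper: the paper likewise deduces the corollary in one line by citing \cite[Theorem 5.4.20]{KaratzasShreve1991}, which converts the uniqueness in law of Proposition \ref{prop:Uniqueness_SDE} into the strong Markov property. Your additional care about reconciling the domain $\bar S_{n,m}$ with the whole-space formulation of that theorem, via the extended coefficients and the invariance of $\bar S_{n,m}$ from Proposition \ref{prop:Existence_SDE}, is a reasonable elaboration of a point the paper passes over silently.
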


\section{Kimura diffusions with singular drift}
\label{sec:Kimura_SDE_singular}
In this section we prove existence, uniqueness in law and the strong Markov property of weak solutions to Kimura stochastic differential equations with \emph{singular drift}, \eqref{eq:Kimura_SDE_singular}. Our strategy of the proof is to apply Girsanov's Theorem \cite[Theorem 3.5.1]{KaratzasShreve1991} to build a new probability measure so that solutions to the standard Kimura stochastic differential equation \eqref{eq:Kimura_SDE} become solutions to the equation with singular drift \eqref{eq:Kimura_SDE_singular}, under the new probability measure. The weak solutions obtained by this method satisfy the strong Markov property. Girsanov's Theorem also allows us to prove that uniqueness in law of weak solutions to equation \eqref{eq:Kimura_SDE_singular} holds, in the class of Markov processes. 

\subsection{Existence of weak solutions}
\label{subsec:Existence_SDE_singular}
To prove existence of weak solutions to the Kimura stochastic differential equation with singular drift \eqref{eq:Kimura_SDE_singular}, we assume that the coefficients satisfy the following conditions. 

\begin{assump}[Properties of the coefficients in \eqref{eq:Kimura_SDE_singular}]
\label{assump:Existence_SDE_singular}
Let $q\in (0,q_0)$, where $q_0$ is given by
\begin{equation}
\label{eq:Choice_q_0}
q_0:=\min\left\{\frac{1}{4},\frac{b_0}{(n+m)K^2}\right\},
\end{equation}
where $K$ is the positive constant appearing in Remark \ref{rmk:Boundedness_coeff_SDE}. We assume that the coefficient of the stochastic differential equation \eqref{eq:Kimura_SDE_singular} satisfy the following properties:
\begin{enumerate}
\item[1.] The functions $(b(z))$, $(e(z))$, and $(\sigma(z))$ satisfy Assumption \ref{assump:Uniqueness_SDE}.
\item[2.] The drift coefficients satisfy the \emph{positivity condition}: there is a positive constant, $b_0$, such that for all $i=1,\ldots,n$ we have that
\begin{equation}
\label{eq:Positivity_cond}
b_i(z) \geq b_0>0,\quad\forall\, z\in \left(\partial S_{n,m}\cap \{x_i=0\}\right).
\end{equation}
\item[3.] The coefficients $f_{ij}:S_{n,m}\rightarrow\RR$ and $h_{ij}:\RR_+\rightarrow\RR$ are Borel measurable functions, for all $i=1,\ldots, n+m$ and all $j=1,\ldots,n$, and there is a positive constant, $K_0$, such that  
\begin{align}
\label{eq:Boundedness_f}
|f(z)| &\leq K_0 \quad\hbox{for a.e. } z \in S_{n,m},\\
\label{eq:Boundedness_sigma_inverse_f}
|\sigma^{-1}(z)f(z)| &\leq K_0 \quad\hbox{for a.e. } z \in S_{n,m},\\
\label{eq:Singularity_h}
|h_{ij}(s)| &\leq K_0 s^{-q}\quad\hbox{for a.e. } s \in \RR_+.
\end{align}
\end{enumerate}
\end{assump}

\begin{rmk}[Invertibility of the matrix coefficient $(\sigma(z))$]
\label{rmk:Invertibility_sigma}
Condition \eqref{eq:Boundedness_sigma_inverse_f} uses the fact that the matrix coefficient $(\sigma(z))$ is invertible on $S_{n,m}$. From identity \eqref{eq:Matrix_a}, the matrix coefficient $(\sigma(z))$ is invertible if and only if $(a(z))$ is invertible. From identities \eqref{eq:tilde_a} and \eqref{eq:Matrix_D}, we see that we can write $D=BaB$, where $B=\left[\diag(\sqrt{x_i}); I_m\right]$. Since $(B(z))$ is invertible on $S_{n,m}$, it remains to show that $(D(z))$ is invertible, in order to conclude that $(a(z))$ is invertible. Note that identity \eqref{eq:Matrix_D} and the strict ellipticity condition \eqref{eq:Uniform_ellipticity} yield that $(D(z))$ is a symmetric, positive definite matrix for all $z\in S_{n,m}$, and so, $(D(z))$ is invertible on $S_{n,m}$. This completes the proof that the matrix coefficient $(\sigma(z))$ is invertible on $S_{n,m}$. 
\end{rmk}

\begin{rmk}[The boundedness condition \eqref{eq:Boundedness_sigma_inverse_f}]
\label{rmk:Boundedness_sigma_inverse_f}
In general, the boundedness condition \eqref{eq:Boundedness_sigma_inverse_f} is not a consequence of Assumption \ref{assump:Uniqueness_SDE}, as it was the case of the invertibility of the matrix coefficient $(\sigma(z))$ on $S_{n,m}$ (see Remark \ref{rmk:Invertibility_sigma}). For the applications we have in mind (see \cite{Epstein_Pop_2013b} and \S \ref{subsec:Applications}), it is sufficient to assume that the coefficient functions $f_{ij}:S_{n,m}\rightarrow\RR$ are \emph{bounded}, Borel measurable functions, for all $i=1,\ldots, n+m$ and all $j=1,\ldots,n$, and that they have support in a small neighborhood of $\{x_i=0,\,\forall\, i=1,\ldots,n\}$. For concreteness, assume that there is $r_0\in (0,1)$ such that
$$
\supp f_{ij} \subseteq \bar A_{r_0} := \left\{x_k\in [0,r_0]:\, k=1,\ldots,n\right\}.
$$
Under such assumptions, condition \eqref{eq:Boundedness_sigma_inverse_f} is a consequence of Assumption \ref{assump:Uniqueness_SDE}. To see this, we show that the matrix coefficient $(\sigma^{-1}(z))$ is bounded on $\bar A_{r_0}$, and using the boundedness of $(f(z))$, we obtain that condition \eqref{eq:Boundedness_sigma_inverse_f} holds. Using identity \eqref{eq:Matrix_a}, we see that the matrix coefficient $(\sigma^{-1}(z))$ is bounded on $\bar A_{r_0}$ if and only if $(a^{-1}(z))$ is bounded on $\bar A_{r_0}$. For this, it is sufficient to prove that $(a(z))$ is a strictly positive definite matrix on $\bar A_{r_0}$. Using identities \eqref{eq:tilde_a}, for all $\xi\in\RR^n$ and $\eta\in\RR^m$, we have that
\begin{align*}
&\sum_{i,j=1}^n a_{ij}(z)\xi_i\xi_j + \sum_{i=1}^n\sum_{l=1}^m \left(a_{i,n+l}(z)+a_{n+l,i}(z)\right)\xi_i\eta_l +\sum_{l,k=1}^m a_{n+l,n+k} \eta_l\eta_k \\
&\quad= \sum_{i=1}^n \alpha_{ii}(z)\xi_i^2 
+ \sum_{i,j=1}^n \sqrt{x_ix_j}\tilde \alpha_{ij}(z)\xi_i\xi_j
+\sum_{i=1}^n \sum_{l=1}^m \sqrt{x_i}c_{il}(z)\xi_i\eta_l 
+\sum_{k,l=1}^m a_{n+k,n+l}(z)\eta_k\eta_l\\
&\quad= \sum_{i=1}^n x_i\alpha_{ii}(z)\xi_i^2 
+ \sum_{i,j=1}^n \sqrt{x_ix_j}\tilde \alpha_{ij}(z)\xi_i\xi_j
+\sum_{i=1}^n \sum_{l=1}^m \sqrt{x_i}c_{il}(z)\xi_i\eta_l 
+\sum_{k,l=1}^m a_{n+k,n+l}(z)\eta_k\eta_l\\
&\quad\quad+\sum_{i=1}^n (1-x_i)\alpha_{ii}(z)\xi_i^2.
\end{align*}
The last inequality and the strict ellipticity condition \eqref{eq:Uniform_ellipticity}, applied with $\sqrt{x_i}\xi$ instead of $\xi_i$, yield
\begin{align*}
&\sum_{i,j=1}^n a_{ij}(z)\xi_i\xi_j + \sum_{i=1}^n\sum_{l=1}^m \left(a_{i,n+l}(z)+a_{n+l,i}(z)\right)\xi_i\eta_l +\sum_{l,k=1}^m a_{n+l,n+k} \eta_l\eta_k \\
&\quad\geq \lambda\left(|x\cdot\xi|^2+|\eta|^2\right)+(1-r_0)\lambda|\xi|^2\\
&\quad \geq (1-r_0)\lambda \left(|\cdot\xi|^2+|\eta|^2\right).
\end{align*}
Thus, indeed the matrix $(a(z))$ is strictly positive definite on $\bar A_{r_0}$, and so, the matrix coefficient $(\sigma^{-1}(z))$ is bounded on $\bar A_{r_0}$.
\end{rmk}

We begin by proving existence of weak solutions to the singular Kimura equation \eqref{eq:Kimura_SDE_singular}. The solutions that we build in Theorem \ref{thm:Existence_SDE_singular} satisfy the strong Markov property.
\begin{thm}[Existence of weak solutions to Kimura equation with singular drift \eqref{eq:Kimura_SDE_singular}]
\label{thm:Existence_SDE_singular}
Suppose that the coefficients of the Kimura stochastic differential equation with singular drift \eqref{eq:Kimura_SDE_singular} satisfy Assumption \ref{assump:Existence_SDE_singular}. Then, for all $z\in\bar S_{n,m}$, there is a weak solution  $\left(Z=(X,Y), W\right)$, $(\Omega,\{\cF_t\}_{t\geq 0},\cF, \PP^{z})$, to equation \eqref{eq:Kimura_SDE_singular}, with initial condition $Z(0)=z$. Moreover, the solution satisfies the strong Markov property. 
\end{thm}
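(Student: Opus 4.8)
The plan is to produce the solution by a Girsanov change of measure applied to the standard Kimura diffusion, following the strategy announced in the introduction. First I would fix the base process: by item~1 of Assumption~\ref{assump:Existence_SDE_singular} the coefficients $b$, $e$, $\sigma$ satisfy Assumption~\ref{assump:Uniqueness_SDE}, so Remark~\ref{rmk:Existence_Kimura_SDE} (existence) together with Proposition~\ref{prop:Uniqueness_SDE} and Corollary~\ref{cor:Strong_Markov} (uniqueness and the strong Markov property) furnish, for each $z\in\bar S_{n,m}$, a weak solution $(\widehat Z,\widehat W)$ on a filtered space $(\Omega,\{\cF(t)\}_{t\geq 0},\cF,\widehat\PP^{z})$ to the standard equation \eqref{eq:Kimura_SDE}, started at $z$, supported in $\bar S_{n,m}$, and strongly Markovian. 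This is the process I would transform.

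Next I would choose the Girsanov drift. Equation \eqref{eq:Kimura_SDE_singular} differs from \eqref{eq:Kimura_SDE} only through the added drift, whose $i$-th component is $\sqrt{X_i}\sum_j f_{ij}h_{ij}(X_j)$ for $i\leq n$ and $\sum_j f_{n+l,j}h_{n+l,j}(X_j)$ for $Y_l$. Writing $u_i(z):=\sum_{j=1}^n f_{ij}(z)h_{ij}(x_j)$, I would set $\theta(t):=\sigma^{-1}(\widehat Z(t))u(\widehat Z(t))$, which is well defined because $\sigma$ is invertible on $S_{n,m}$ (Remark~\ref{rmk:Invertibility_sigma}). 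A direct computation using \eqref{eq:varsigma} shows that $\varsigma(\widehat Z(t))\theta(t)$ reproduces exactly the added drift, so that replacing $\widehat W$ by $W(t):=\widehat W(t)-\int_0^t\theta(s)\,ds$ converts \eqref{eq:Kimura_SDE} into \eqref{eq:Kimura_SDE_singular}. I then consider the exponential local martingale $M(t)=\exp(\int_0^t\theta^*\,d\widehat W-\tfrac12\int_0^t|\theta|^2\,ds)$.

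To legitimately invoke Girsanov's Theorem \cite[Theorem 3.5.1]{KaratzasShreve1991} I must know $M$ is a true martingale on $[0,T]$, for which it suffices to verify Novikov's condition \cite[Corollary 3.5.13]{KaratzasShreve1991}, namely $\EE_{\widehat\PP^{z}}[\exp(\tfrac12\int_0^T|\theta(s)|^2\,ds)]<\infty$. Combining the bounds \eqref{eq:Boundedness_f}, \eqref{eq:Boundedness_sigma_inverse_f} and the singularity estimate \eqref{eq:Singularity_h} yields $|\theta(t)|^2\leq C\sum_{j=1}^n\widehat X_j(t)^{-2q}$ for a constant $C=C(n,m,K_0)$, so the crux reduces to exponential integrability of the additive functional $\int_0^T\widehat X_j(s)^{-2q}\,ds$ near the degenerate face $\{x_j=0\}$. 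I expect this to be the main obstacle. Here the positivity condition \eqref{eq:Positivity_cond}, $b_j\geq b_0>0$, is decisive: it pushes $\widehat X_j$ away from the boundary much as a squared-Bessel/CIR process with strictly positive drift, so that negative moments of $\widehat X_j$ stay controlled. This is precisely what the threshold $q_0$ in \eqref{eq:Choice_q_0} is engineered to guarantee, the bound $b_0/((n+m)K^2)$ together with $q<1/4$ making the relevant exponential moment finite. I would carry this out by applying It\^o's rule to $\widehat X_j^{-\beta}$ (or to $\exp(\lambda\widehat X_j^{\gamma})$) and a Khasminskii-type iteration, using the Markov property of $\widehat Z$ to localize and propagate the estimate, recording the result as Lemma~\ref{lem:Novikov_cond_singular}.

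Granting Novikov's condition, Girsanov's Theorem produces, for each $T>0$, a probability measure $\PP^{z}_T$ with $d\PP^{z}_T=M(T)\,d\widehat\PP^{z}$ under which $W$ is a Brownian motion and $(\widehat Z,W)$ solves \eqref{eq:Kimura_SDE_singular} with $\widehat Z(0)=z$; since $\PP^{z}_T\ll\widehat\PP^{z}$, the solution remains supported in $\bar S_{n,m}$. The martingale property of $M$ makes these measures consistent in $T$, so they define a single $\PP^{z}$ and a weak solution $(Z,W)=(\widehat Z,W)$ on $[0,\infty)$. Finally, because $\theta(t)$ is a function of the current state $\widehat Z(t)$, the density $M$ is a multiplicative functional of the strong Markov process $\widehat Z$; I would then verify that the transformed family $\{\PP^{z}\}_{z\in\bar S_{n,m}}$ is consistent and invoke the standard fact that a Girsanov transform by a multiplicative functional of a strong Markov process preserves the strong Markov property, which completes the proof.
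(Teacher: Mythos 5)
Your existence argument coincides with the paper's: same base process, same Girsanov drift $\theta=\sigma^{-1}\xi$ with $\xi_i(z)=\sum_j f_{ij}(z)h_{ij}(x_j)$, same reduction of Novikov's condition to exponential integrability of $\int_0^T\sum_i\widehat X_i(s)^{-2q}\,ds$ under $\widehat\PP^z$, and the same Khas'minskii iteration exploiting the Markov property and the positivity condition \eqref{eq:Positivity_cond}. (One small correction: the Novikov estimate you need here is the one for the \emph{standard} diffusion, Lemma \ref{lem:Novikov_cond} via Lemma \ref{lem:Khasminskii}, not Lemma \ref{lem:Novikov_cond_singular}, which concerns the singular process and is only needed later for uniqueness.)

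The place where your proposal falls short of a proof is the strong Markov property. You invoke ``the standard fact that a Girsanov transform by a multiplicative functional of a strong Markov process preserves the strong Markov property,'' but that is precisely the content of the paper's Step 2, and it is not available off the shelf in this setting. The paper proves it by hand: it first establishes the identity $\EE_{\widehat\PP^z}[Y]=\EE_{\PP^z}[\widehat M(\tau)^{-1}Y]$ for bounded $\cF_\tau$-measurable $Y$ and arbitrary bounded stopping times $\tau$ (Claim \ref{claim:Change_of_measure_and_cond_exp}), by approximating $\tau$ with discrete stopping times and passing to the limit. That passage requires uniform integrability of $\{\widehat M^{-1}(\tau_k)\}_k$, which is proved via H\"older's inequality and a bound on $\EE_{\widehat\PP^z}[\widehat M(\tau_k)^{-p}]$ that again rests on the exponential moment estimate of Lemma \ref{lem:Novikov_cond} — i.e., the singularity of $\theta$ re-enters here and must be controlled a second time. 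From that identity one derives the Bayes rule for conditional expectations at stopping times, and only then can the strong Markov property of $\widehat Z$ (Corollary \ref{cor:Strong_Markov}) be transferred to $Z$. Without carrying out this uniform integrability step, the assertion that the transformed process is strongly Markov is not justified; your outline names the right mechanism but omits the argument that makes it work.
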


The proof of Theorem \ref{thm:Existence_SDE_singular} is based on an application of Girsanov's Theorem. We change the probability distributions of the weak solutions of the standard Kimura equation \eqref{eq:Kimura_SDE} obtained in Proposition \ref{prop:Existence_SDE}, so that we add a singular drift as in equation \eqref{eq:Kimura_SDE_singular}. In order to justify the application of Girsanov's Theorem, we prove that Novikov's condition, \cite[Corollary 3.5.13]{KaratzasShreve1991}, for standard Kimura diffusions holds.

\begin{lem}[Novikov's condition for standard Kimura diffusions]
\label{lem:Novikov_cond}
Suppose that the coefficients of the Kimura stochastic differential equation \eqref{eq:Kimura_SDE} satisfy Assumption \ref{assump:Uniqueness_SDE} and condition \eqref{eq:Positivity_cond}. Let $q\in (0,q_0)$, where the positive constant $q_0$ is given by \eqref{eq:Choice_q_0}. Then, for all $\Lambda>0$ and $T>0$, we have
\begin{equation}
\label{eq:Novikov_cond}
\sup_{z\in\bar S_{n,m}}\EE_{\widehat \PP^{z}}\left[\hbox{exp}\left(\Lambda\int_0^T  \sum_{i=1}^n |\widehat X_i(t)|^{-2q} \, dt\right)\right]<\infty,
\end{equation}
where $\{\widehat Z(t)=(\widehat X(t), \widehat Y(t))\}_{t\geq 0}$ is the unique weak solution to the Kimura stochastic equation \eqref{eq:Kimura_SDE}, with initial condition $\widehat Z(0)=z$.
\end{lem}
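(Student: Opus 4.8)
The plan is to deduce the exponential bound \eqref{eq:Novikov_cond} from a first-moment estimate by a Khasminskii-type iteration, and to obtain that first-moment estimate from the squared-Bessel nature of each coordinate $\widehat X_i$ together with the positivity of the drift near the boundary. Writing $V(z):=\sum_{i=1}^n x_i^{-2q}\ge 0$, the quantitative goal is
\[
\kappa(\tau):=\sup_{z\in\bar S_{n,m}}\EE_{\widehat\PP^z}\left[\int_0^\tau V(\widehat Z(t))\,dt\right]\longrightarrow 0\qquad\text{as }\tau\downarrow 0 .
\]
Granting this, I would fix $T,\Lambda>0$, pick $\tau=T/N$ with $N$ large enough that $\Lambda\kappa(\tau)<1$, and run the standard Khasminskii argument: using the time-homogeneous strong Markov property (Corollary \ref{cor:Strong_Markov}) one proves by induction that $\sup_z\EE_{\widehat\PP^z}[(\Lambda\int_0^\tau V\,dt)^k]\le k!\,(\Lambda\kappa(\tau))^k$, hence $\sup_z\EE_{\widehat\PP^z}[\exp(\Lambda\int_0^\tau V\,dt)]\le(1-\Lambda\kappa(\tau))^{-1}$; conditioning successively on $\cF(j\tau)$ and invoking the Markov property then yields $\sup_z\EE_{\widehat\PP^z}[\exp(\Lambda\int_0^T V\,dt)]\le(1-\Lambda\kappa(\tau))^{-N}<\infty$, which is \eqref{eq:Novikov_cond}. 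This is exactly where the Markov property is used.

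By symmetry it suffices to bound a single term $\EE_{\widehat\PP^z}[\int_0^\tau \widehat X_i(t)^{-2q}\,dt]$. From \eqref{eq:Kimura_SDE}, $\widehat X_i$ solves $d\widehat X_i=b_i\,dt+\sqrt{\widehat X_i}\sum_k\sigma_{ik}\,d\widehat W_k$ with quadratic variation $2\widehat X_i\,a_{ii}(\widehat Z)\,dt$, so $\widehat X_i$ is a state-dependent squared-Bessel type process whose negative moment of order $2q$ is integrable precisely when $2q<b_i/a_{ii}$. The constants are arranged so that this holds near the boundary: by \eqref{eq:Boundedness_coeff_SDE} one has $a_{ii}=\tfrac12(\sigma\sigma^*)_{ii}\le\tfrac12(n+m)K^2=:A_0$, while \eqref{eq:Positivity_cond} together with the uniform anisotropic H\"older continuity \eqref{eq:Holder_cont} of $b_i$ gives $b_i\ge b_0-\eta$ on a strip $\{x_i\le r_0\}$ with $\eta$ as small as desired; since $q<q_0\le b_0/((n+m)K^2)=b_0/(2A_0)$ by \eqref{eq:Choice_q_0}, I may fix $r_0,\eta$ so that
\[
\beta_0:=\inf_{\{x_i\le r_0\}}\bigl(b_i(z)-2q\,a_{ii}(z)\bigr)\ge b_0-\eta-2qA_0>0 .
\]
The complementary bound $q_0\le\tfrac14$ ensures $1-2q>\tfrac12>0$, which keeps the comparison function below bounded near the boundary.

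To produce the estimate I would apply It\^o's rule (Dynkin's formula) with $G(x):=-C_0x^{1-2q}$, $C_0:=((1-2q)\beta_0)^{-1}$, whose one-dimensional generator $\cA_i G:=a_{ii}xG''+b_iG'$ satisfies $\cA_i G(x)=-C_0(1-2q)(b_i-2qa_{ii})x^{-2q}\le -x^{-2q}$ on $\{x\le r_0\}$, while $\cA_iG$ is bounded on $\{x>r_0\}$; hence $\cA_iG(x)\le -x^{-2q}+C_2\mathbf{1}_{\{x>r_0\}}$ globally. Taking expectations and rearranging gives
\[
\EE_{\widehat\PP^z}\left[\int_0^\tau \widehat X_i(t)^{-2q}\,dt\right]\le C_0\bigl(\EE_{\widehat\PP^z}[\widehat X_i(\tau)^{1-2q}]-z_i^{1-2q}\bigr)+C_2\tau .
\]
Since $\widehat X_i\ge 0$ (the solution is supported in $\bar S_{n,m}$), concavity of $s\mapsto s^{1-2q}$, Jensen's inequality and $\EE_{\widehat\PP^z}[\widehat X_i(\tau)]\le z_i+K\tau$ give $\EE_{\widehat\PP^z}[\widehat X_i(\tau)^{1-2q}]\le(z_i+K\tau)^{1-2q}\le z_i^{1-2q}+(K\tau)^{1-2q}$, so that $\sup_z\EE_{\widehat\PP^z}[\int_0^\tau \widehat X_i^{-2q}\,dt]\le C_0(K\tau)^{1-2q}+C_2\tau$. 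Summing over $i$ yields $\kappa(\tau)\le C\tau^{1-2q}+C'\tau\to 0$, completing the scheme.

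The main obstacle is twofold. Conceptually, the coefficients $a_{ii}(\widehat Z)$ and $b_i(\widehat Z)$ depend on the entire process $\widehat Z$, not on $\widehat X_i$ alone, so explicit squared-Bessel negative-moment formulas are unavailable; the Lyapunov function $G$ is chosen precisely so that the sign of $\cA_iG$ is controlled using only $a_{ii}\le A_0$ and $b_i\ge b_0$ on the boundary strip, independently of the other coordinates. Technically, $G(x)=-C_0x^{1-2q}$ is not $C^2$ at $x=0$, so It\^o's rule does not apply directly; I would remove this by working with the regularization $G_\eps(x)=-C_0(x+\eps)^{1-2q}$, which is smooth on $[0,\infty)$ and satisfies $\cA_iG_\eps(x)\le -x(x+\eps)^{-2q-1}+C_2\mathbf{1}_{\{x>r_0\}}$ with the same constants (here $b_i\ge 0$ is used), and then letting $\eps\downarrow 0$ by monotone convergence on the left and bounded convergence on the right. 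Justifying these passages to the limit, equivalently that $\widehat X_i$ spends no time at the origin and that the stochastic integral is a genuine martingale, is the most delicate point, and is exactly where the threshold $2q<b_0/A_0$ enforced by \eqref{eq:Choice_q_0} is indispensable.
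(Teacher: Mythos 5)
Your proposal is correct and follows essentially the same route as the paper: a first-moment (Khas'minskii-type) smallness estimate obtained by applying It\^o's rule to a regularization of $x\mapsto x^{1-2q}$, using the boundary positivity \eqref{eq:Positivity_cond} and the threshold $q<q_0$ from \eqref{eq:Choice_q_0} to make $b_0-q(n+m)K^2>0$, followed by the standard iteration over time intervals via the Markov property to pass from small times to arbitrary $T$ (this is the paper's Lemma \ref{lem:Khasminskii} plus the Aizenman--Simon argument). The only differences are cosmetic: the paper localizes with a cutoff $\varphi$ supported near the boundary rather than absorbing the interior region into a bounded remainder, and in your regularization step you should keep the sharper bound $\cA_i G_\eps\le -(x+\eps)^{-2q}$ on $\{x\le r_0\}$ (which your own inequality chain already yields, without needing $b_i\ge 0$) instead of $-x(x+\eps)^{-2q-1}$, since the former blows up at $x=0$ as $\eps\downarrow 0$ and thus simultaneously delivers the fact that $\widehat X_i$ spends no time at the origin, which the statement \eqref{eq:Novikov_cond} implicitly requires.
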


An elementary method to guarantee that condition \eqref{eq:Novikov_cond} holds is to prove that the hypotheses of Khas'minskii's Lemma \cite{Berthier_Gaveau_1978, Khasminskii, Portenko} are satisfied. A statement of Khas'minskii's Lemma when the underlying process is Brownian motion, can be found in \cite[Theorem 1.2]{Aizenman_Simon_1982}. 

\begin{lem}[Verification of the hypotheses of Khas'minskii's Lemma]
\label{lem:Khasminskii}
Suppose that the coefficients of the Kimura stochastic differential equation \eqref{eq:Kimura_SDE} satisfy Assumption \ref{assump:Uniqueness_SDE} and condition \eqref{eq:Positivity_cond}. Let $\Lambda$ be a positive constant and $q\in (0,q_0)$, where the positive constant $q_0$ is given by \eqref{eq:Choice_q_0}. Then for all $\delta\in (0,1)$, there is a positive constant, $T=T(b_0,\delta,K,\Lambda,m,n,q)$, such that
\begin{equation}
\label{eq:Khasminskii}
\sup_{z\in\bar S_{n,m}}\EE_{\widehat \PP^{z}}\left[\int_0^T  \Lambda \sum_{i=1}^n |\widehat X_i(t)|^{-2q} \, dt\right]<\delta,
\end{equation}
where $\{\widehat Z(t)=(\widehat X(t), \widehat Y(t))\}_{t\geq 0}$ is the unique weak solution to the Kimura stochastic equation \eqref{eq:Kimura_SDE}, with initial condition $\widehat Z(0)=z$.
\end{lem}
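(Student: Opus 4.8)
The plan is to reduce \eqref{eq:Khasminskii}, by Tonelli's theorem, to a uniform bound on the negative moments of the one-dimensional marginals. Since the integrand is nonnegative,
\[
\EE_{\widehat\PP^{z}}\left[\int_0^T \Lambda\sum_{i=1}^n \widehat X_i(t)^{-2q}\,dt\right]=\Lambda\sum_{i=1}^n\int_0^T\EE_{\widehat\PP^{z}}\left[\widehat X_i(t)^{-2q}\right]dt,
\]
so it suffices to prove, for each $i$, the estimate $\sup_{z\in\bar S_{n,m}}\EE_{\widehat\PP^{z}}[\widehat X_i(t)^{-2q}]\leq C\,t^{-2q}$ with $C=C(b_0,K,m,n,q)$, valid for $t$ in a fixed interval $(0,t_0]$. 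Because $q<q_0\leq 1/4$ forces $2q<1/2$, the time integral $\int_0^T t^{-2q}\,dt=T^{1-2q}/(1-2q)$ is finite and vanishes as $T\downarrow 0$; choosing $T$ small then drives the left-hand side of \eqref{eq:Khasminskii} below $\delta$, uniformly in $z$.

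To obtain the marginal bound I would use that each $\widehat X_i$ is a squared Bessel type (Feller) diffusion: by \eqref{eq:Kimura_SDE} and \eqref{eq:Matrix_D} its drift is $b_i(\widehat Z(t))$ and its quadratic variation is $d\langle\widehat X_i\rangle_t=2a_{ii}(\widehat Z(t))\widehat X_i(t)\,dt$. By Remark \ref{rmk:Boundedness_coeff_SDE} one has $a_{ii}\leq\Lambda_0:=(n+m)K^2/2$, and the strict ellipticity \eqref{eq:Uniform_ellipticity} together with continuity of the coefficients gives $a_{ii}\geq c_0>0$ on $\{x_i\leq r_0\}$. The random time change $C_i(t):=\int_0^t a_{ii}(\widehat Z(s))\,ds$ turns $\widehat X_i$ into a diffusion $\widetilde X_i$ with diffusion coefficient exactly $\sqrt{2\widetilde X_i}$ and drift $(b_i/a_{ii})(\widehat Z)$. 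The crucial input is the Wright--Fisher H\"older continuity \eqref{eq:Holder_cont}, which controls $b_i$ in the degenerate metric $\rho$: since $\rho$ between a point and its projection onto $\{x_i=0\}$ is comparable to $\sqrt{x_i}$, one gets $b_i(z)\geq b_0-Cx_i^{\alpha/2}$, so for any $\eps\in(0,1-q/q_0)$ there is $r_0=r_0(\eps)$, \emph{uniform in the transverse variables}, with $b_i\geq b_0(1-\eps)$ on $\{x_i\leq r_0\}$. As $b_0/\Lambda_0\geq 2q_0$ by \eqref{eq:Choice_q_0}, the transformed drift obeys $b_i/a_{ii}\geq\nu_*:=2q_0(1-\eps)>2q$ there.

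By the one-dimensional comparison theorem (equal diffusion coefficient $y\mapsto\sqrt{2y}$, which is $\tfrac12$-H\"older, and ordered drifts) I would dominate $\widetilde X_i$ from below by the process $\underline Y$ with drift $\nu_*\mathbf{1}_{\{\cdot\leq r_0\}}$, giving $\widetilde X_i\geq\underline Y$ pathwise. Below $r_0$ the process $\underline Y$ coincides with the squared Bessel process of constant drift $\nu_*$, whose marginal started from $0$ is $\mathrm{Gamma}(\nu_*,u)$; since $\nu_*>2q$ this yields $\EE_0[\,\cdot\,^{-2q}]=\bigl(\Gamma(\nu_*-2q)/\Gamma(\nu_*)\bigr)u^{-2q}$, and starting from any point only decreases the negative moment. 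Splitting at the first time $\underline Y$ reaches $r_0$ and using the strong Markov property, the post-exit contribution is bounded on $u\in(0,u_0]$, so $\EE[\widetilde X_i(u)^{-2q}]\leq C\,u^{-2q}$ uniformly in the starting point for $u\le u_0$. Undoing the time change, using $a_{ii}\geq c_0$ on $\{x_i\le r_0\}$ and $C_i(T)\leq\Lambda_0 T$ for the change of variables, and treating the region $\{\widehat X_i>r_0\}$ (where the integrand is $\leq r_0^{-2q}$) trivially, one arrives at $\EE_{\widehat\PP^{z}}[\int_0^T\widehat X_i^{-2q}\,dt]\lesssim T^{1-2q}+T$, which is exactly what the reduction needs.

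The main obstacle is this negative-moment estimate, for two linked reasons. First, since $q_0\leq 1/4$ forces $\nu_*<1$, we land in the regime $2q<\nu_*<1$, where applying It\^o's formula directly to $\widehat X_i^{-2q}$ produces a drift term of the \emph{wrong sign} (the generator of the process sends $x^{-2q}$ to a positive multiple of $x^{-2q-1}$ near the face): a naive Lyapunov/Dynkin argument therefore fails, and one is forced to use the exact self-similar (Gamma) law of the squared Bessel marginal to capture the sharp decay $t^{-2q}$. Second, the positivity \eqref{eq:Positivity_cond} holds only on $\partial S_{n,m}$, so the favorable drift bound is genuinely local; making the comparison rigorous---normalizing the diffusion coefficient via the time change, using the $C^{\alpha}_{WF}$ modulus to render the neighborhood $\{x_i\leq r_0\}$ uniform, and absorbing excursions away from the face through the strong Markov property---is the technical heart. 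The conversion of the pointwise bound $t^{-2q}$ into the smallness \eqref{eq:Khasminskii} is precisely what the constraint $q<q_0\leq 1/4$, i.e. $2q<1/2<1$, secures.
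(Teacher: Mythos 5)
Your overall strategy is genuinely different from the paper's, and as written it has gaps; moreover, the ``main obstacle'' you identify is not actually an obstacle, and recognizing this collapses the proof to a few lines. You dismiss the Lyapunov/Dynkin route because the generator applied to $x^{-2q}$ has the wrong sign, but the paper applies It\^o's formula to $\varphi(\widehat X_i^{\eps})(\widehat X_i^{\eps})^{1-2q}$, i.e.\ to the test function $V(x)=x^{1-2q}$ localized by a cutoff $\varphi$ supported in $\{x_i\le r\}$ and regularized by $\eps$. For this choice the drift term is
$(1-2q)\,\varphi(\widehat X_i^{\eps})(\widehat X_i^{\eps})^{-2q}\bigl(b_i(\widehat Z)-q|\sigma_i(\widehat Z)|^2 \widehat X_i/\widehat X_i^{\eps}\bigr)$,
and since $|\sigma_i|^2\le (n+m)K^2$ and $b_i\ge b_0/(1+\rho)$ on $\{x_i\le r\}$, the bracket is bounded below by a \emph{positive} constant precisely because $q<q_0\le b_0/((n+m)K^2)$. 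Since $V$ is bounded on the support of $\varphi$ and the cutoff error terms contribute at most $r^{1-2q}+Cr^{-2q}T$, Dynkin's formula bounds the occupation functional $\EE_{\widehat\PP^z}\int_0^T \widehat X_i^{-2q}\mathbf{1}_{\{\widehat X_i\le r/2\}}\,dt$ directly and uniformly in $z$; the region $\{\widehat X_i>r/2\}$ contributes at most $(r/2)^{-2q}T$, and one chooses first $r$ then $T$ small. No marginal estimate $\EE[\widehat X_i(t)^{-2q}]\lesssim t^{-2q}$, time change, or Bessel comparison is needed.

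Two steps of your argument do not go through under the stated hypotheses. First, the pathwise domination $\widetilde X_i\ge \underline Y$ via the one-dimensional comparison theorem requires the drift of $\widetilde X_i$ to dominate $\nu_*\mathbf{1}_{\{\cdot\le r_0\}}$ evaluated along $\widetilde X_i$ \emph{for all time}; but Assumption \ref{assump:Uniqueness_SDE} imposes $b_i\ge 0$ only on $\partial S_{n,m}\cap\{x_i=0\}$, so off the face $b_i$ may be negative and the domination fails above $r_0$. Your proposed repair (restart at entries to $\{x_i\le r_0\}$ ``using the strong Markov property'') is applied to $\widetilde X_i$, which is not a Markov process --- only the full solution $\widehat Z$ is --- so the excursion decomposition and the claim that the post-entry contribution is uniformly bounded need a genuinely different justification. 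Second, undoing the time change requires $a_{ii}\ge c_0>0$ on $\{x_i\le r_0\}$; but from \eqref{eq:Uniform_ellipticity} with $\xi=e_i$, $i\in I$, one only extracts $\alpha_{ii}+\tilde\alpha_{ii}\ge\lambda$, not $\alpha_{ii}\ge\lambda$, and $a_{ii}=\alpha_{ii}+x_i\tilde\alpha_{ii}$ is therefore not bounded below near the face by the stated assumptions (e.g.\ $\alpha_{ii}=0$, $\tilde\alpha_{ii}=\lambda$ is not excluded). The paper's argument avoids this entirely, using only the \emph{upper} bound $|\sigma_i|^2\le(n+m)K^2$ from Remark \ref{rmk:Boundedness_coeff_SDE}. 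Your reduction via Tonelli and the arithmetic $2q<1/2$ are fine, but the core estimate is not established as proposed.
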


\begin{proof}
Without loss of generality, we may assume that $\Lambda=1$. Using condition \eqref{eq:Positivity_cond} and the uniform continuity of the coefficient $b_i(z)$ implied by \eqref{eq:Holder_cont}, we obtain that for all $\rho\in (0,1)$, there is a positive constant, $r$, such that
\begin{equation}
\label{eq:Bound_below_drift}
b_i(z) \geq \frac{b_0}{1+\rho}\quad\hbox{on } \{z=(x,y)\in S_{n,m}:\ x_i\in [0,r]\},\quad\forall\, i=1,\ldots,n.
\end{equation}
Let $\varphi:[0,\infty)\rightarrow[0,1]$ be a smooth cut-off function, such that $\varphi(s)=1$ for $s \leq r/2$, and $\varphi(s)=0$ for $s \geq r$, and such that there is a positive constant, $c$, with the property that
\begin{equation}
\label{eq:Derivatives_varphi}
\|\varphi'\|_{C(\RR)} \leq cr^{-1},\quad\hbox{and}\quad \|\varphi''\|_{C(\RR)} \leq c r^{-2}. 
\end{equation}
For all $\eps\in (0,1)$, we let $\widehat X^{\eps}_i(t)=\widehat X_i(t)+\eps$, and $x_i^{\eps}=x_i+\eps$. By It\^o's rule \cite[Theorem 3.3.6]{KaratzasShreve1991} applied to the process $\varphi(X^{\eps}_i(t))(X^{\eps}_i(t))^{1-2q}$, we obtain
\begin{align*}
&d\varphi(\widehat X^{\eps}_i(t))(\widehat X^{\eps}_i(t))^{1-2q} =(1-2q) \varphi(\widehat X^{\eps}_i(t))(\widehat X^{\eps}_i(t))^{-2q} \left(b_i(\widehat Z(t))- q|\sigma_i(\widehat Z(t))|^2\frac{\widehat X_i(t)}{\widehat X^{\eps}_i(t)}\right)\, dt\\
&\qquad+ b_i(\widehat Z(t))\varphi'(\widehat X^{\eps}_i(t))(\widehat X^{\eps}_i(t))^{1-2q}\, dt\\
&\qquad + \frac{|\sigma_i(\widehat Z(t))|^2}{2} \widehat X_i(t) (\widehat X^{\eps}_i(t))^{-2q}\left(\widehat X^{\eps}_i(t)\varphi''(\widehat X^{\eps}_i(t))  +2(1-2q)\varphi'(\widehat X^{\eps}_i(t))\right)\, dt\\
&\qquad+ (\widehat X^{\eps}_i(t))^{-2q}\left(\widehat X^{\eps}_i(t)\varphi'(\widehat X^{\eps}_i(t)) +(1-2q)\varphi(\widehat X^{\eps}_i(t))\right)\sqrt{\widehat X_i(t)}\sigma_i(\widehat Z(t))\cdot\, d\widehat W(t),
\end{align*}
where $\sigma_i(z)$ denotes the $i$-th row of the matrix function $(\sigma(z))$. From Remark \ref{rmk:Boundedness_coeff_varsigma} and identity \eqref{eq:varsigma}, we see that the coefficients $(\sqrt{x_i}\sigma_i(z))$ are bounded, and so, the $d\widehat W(t)$-term in the preceding equality defines a martingale. We obtain
\begin{align*}
&\EE_{\widehat \PP^{z}}\left[\varphi(\widehat X^{\eps}_i(T))(\widehat X^{\eps}_i(T))^{1-2q}\right] 
= \varphi(x^{\eps}_i)(x^{\eps})^{1-2q}\\
&\quad + (1-2q) \EE_{\widehat \PP^{z}}
\left[\int_0^T \varphi(\widehat X^{\eps}_i(t))(\widehat X^{\eps}_i(t))^{-2q} \left(b_i(\widehat Z(t))-q|\sigma_i(\widehat Z(t))|^2\frac{\widehat X_i(t)}{\widehat X^{\eps}_i(t)}\right)\, dt\right]\\
&\quad+ \EE_{\widehat \PP^{z}}\left[\int_0^Tb_i(\widehat Z(t))\varphi'(\widehat X^{\eps}_i(t))(\widehat X^{\eps}_i(t))^{1-2q}\, dt\right]\\
&\quad+ \EE_{\widehat \PP^{z}}\left[\int_0^T\frac{|\sigma_i(\widehat Z(t))|^2}{2} \widehat X_i(t) (\widehat X^{\eps}_i(t))^{-2q}\left(\widehat X^{\eps}_i(t)\varphi''(\widehat X^{\eps}_i(t)) +2(1-2q)\varphi'(\widehat X^{\eps}_i(t))\right)\, dt\right],
\end{align*}
for all $z=(x,y)\in\bar S_{n,m}$. The preceding identity together with the boundedness of the coefficients $(b(z))$ and $(\sigma(z))$ (see inequality \eqref{eq:Boundedness_coeff_SDE}), and the choice of the cut-off function $\varphi$ and \eqref{eq:Derivatives_varphi}, give us that there is a positive constant, $C=C(K,m,n)$, such that
\begin{align*}
(1-2q) \EE_{\widehat \PP^{z}}
\left[\int_0^T \varphi(\widehat X^{\eps}_i(t))(\widehat X^{\eps}_i(t))^{-2q} \left(b_i(\widehat Z(t))- q|\sigma_i(\widehat Z(t))|^2\frac{\widehat X_i(t)}{\widehat X^{\eps}_i(t)}\right)\, dt\right]
\leq r^{1-2q}+C r^{-2q} T.
\end{align*}
Using inequalities \eqref{eq:Bound_below_drift} and \eqref{eq:Boundedness_coeff_SDE}, we see that 
\begin{equation}
\label{eq:Lower_bound}
\begin{aligned}
&(1-2q) \EE_{\widehat\PP^{z}}\left[\int_0^T \varphi(\widehat X^{\eps}_i(t))(\widehat X^{\eps}_i(t))^{-2q} \left(b_i(\widehat Z(t))- q|\sigma_i(\widehat Z(t))|^2\frac{\widehat X_i(t)}{\widehat X^{\eps}_i(t)}\right)\, dt\right]\\
&\quad\geq (1-2q)\left(\frac{b_0}{1+\rho}-q(n+m)K^2\right) \EE_{\widehat \PP^{z}}\left[\int_0^T (\widehat X^{\eps}_i(t))^{-2q}  \mathbf{1}_{\{\widehat X_i(t)\in [0,r/2]\}} \, dt\right].
\end{aligned}
\end{equation}
Combining the preceding two inequalities, and letting $\eps$ tend to $0$, we obtain that there is a positive constant, $C=C(K,m,n)$, such that
\begin{align}
\label{eq:Inequality_X_power}
\EE_{\widehat \PP^{z}}\left[\int_0^T (\widehat X_i(t))^{-2q}  \, dt\right]
\leq \frac{r^{1-2q}+C r^{-2q} T}{(1-2q)\left(\frac{b_0}{1+\rho}-q(n+m)K^2\right)}.
\end{align}
Note that by choosing $q\in (0,q_0)$, where the positive constant $q_0$ is given by \eqref{eq:Choice_q_0}, we can find a positive constant $\rho_0=\rho_0(K,m,n)$, such that
$$
\frac{b_0}{1+\rho}-q(n+m)K^2>0.
$$
Then we choose $r_0=r_0(\|b\|_{C^{\alpha}_{WF}(\bar S_{n,m})},K,m,n)$, such that inequality \eqref{eq:Bound_below_drift} holds with $\rho$ replaced by $\rho_0$, for all $r\in (0,r_0)$. For all $\delta\in (0,1)$, let $r=r(\|b\|_{C^{\alpha}_{WF}(\bar S_{n,m})},\delta,K,m,n,q)$ and $T=T(\delta,K,m,n,q)$ be chosen small enough such that using inequality \eqref{eq:Inequality_X_power}, we obtain that estimate \eqref{eq:Khasminskii} holds. This completes the proof.
\end{proof}

Using Lemma \ref{lem:Khasminskii}, we can now give the proof of

\begin{proof}[Proof of Lemma \ref{lem:Novikov_cond}]
By Corollary \ref{cor:Strong_Markov}, the solutions to standard Kimura stochastic differential equations \eqref{eq:Kimura_SDE} satisfy the Markov property. Thus, the proof of \cite[Theorem 1.2]{Aizenman_Simon_1982} easily adapts to standard Kimura diffusions in place of standard Brownian motion, and using Lemma \ref{lem:Khasminskii}, we obtain that for all $\delta>0$, there is $T_{\delta}>0$, such that 
\begin{equation}
\label{eq:Novikov_cond_fixed_time}
\sup_{z\in\bar S_{n,m}}\EE_{\widehat \PP^{z}}\left[\hbox{exp}\left(\int_0^{T_{\delta}}  \varphi(\widehat X(t))\, dt\right)\right]<\frac{1}{1-\delta},
\end{equation}
where we denote for brevity, $\varphi(x):=\Lambda\sum_{i=1}^n |x_i|^{-2q}$, for all $x\in \RR^n_+$. Let $T>0$ and set $k:=\left\lceil{T/T_{\delta}}\right\rceil$. We consider the sequence $T_i:=T-(k-i)T_{\delta}$, for all $i=1,\ldots,k$, and $T_0=0$. We have, for all $z\in\bar S_{n,m}$,
\begin{align*}
\EE_{\widehat \PP^{z}}\left[\hbox{exp}\left(\int_0^T \varphi(\widehat X(t))\, dt\right)\right]
&= \EE_{\widehat \PP^{z}}\left[ e^{\int_0^{T_{k-1}} \varphi(\widehat X(t))\, dt}
e^{\int_{T_{k-1}}^{T_k} \varphi(\widehat X(t))\, dt} \right]\\
&=\EE_{\widehat \PP^{z}}\left[\EE_{\widehat \PP^{z}}\left[ e^{\int_0^{T_{k-1}} \varphi(\widehat X(t))\, dt}
e^{\int_{T_{k-1}}^{T_k} \varphi(\widehat X(t))\, dt} \Big{|} \cF_{T_{k-1}}\right] \right]\\
&=\EE_{\widehat \PP^{z}}\left[ e^{\int_0^{T_{k-1}} \varphi(\widehat X(t))\, dt}
\EE_{\widehat \PP^{\widehat Z(T_{k-1})}}\left[ 
e^{\int_0^{T_{\delta}} \varphi(\widehat X(t))\, dt} \right] \right].
\end{align*}
Inequality \eqref{eq:Novikov_cond_fixed_time} gives us
\begin{align*}
\EE_{\widehat \PP^{z}}\left[\hbox{exp}\left(\int_0^T \varphi(\widehat X(t))\, dt\right)\right]
&\leq \frac{1}{1-\delta} \EE_{\widehat \PP^{z}}\left[ e^{\int_0^{T_{k-1}} \varphi(\widehat X(t))\, dt}\right],
\end{align*}
and iterating the preceding argument $k$ times, we obtain
\begin{align*}
\EE_{\widehat \PP^{z}}\left[\hbox{exp}\left(\int_0^T \varphi(\widehat X(t))\, dt\right)\right]
&\leq \frac{1}{(1-\delta)^k},\quad\forall z\in\bar S_{n,m}.
\end{align*}
Thus, inequality \eqref{eq:Novikov_cond} now follows.
\end{proof}

Lemma \ref{lem:Novikov_cond} allows us to establish the

\begin{proof}[Proof of Theorem \ref{thm:Existence_SDE_singular}]
We divide the proof into two steps. In Step \ref{step:Existence_SDE_singular}, we prove existence of weak solutions to the Kimura stochastic differential equation with singular drift \eqref{eq:Kimura_SDE_singular}, via Girsanov's Theorem, and in Step \ref{step:Markov_SDE_singular} we show that the solutions constructed in Step \ref{step:Existence_SDE_singular} satisfy the strong Markov property.

\setcounter{step}{0}
\begin{step}[Existence of weak solutions to equation \eqref{eq:Kimura_SDE_singular}]
\label{step:Existence_SDE_singular}
Let $z\in\bar S_{n,m}$. Because the coefficient functions $b$, $e$, and $\sigma$ satisfy Assumptions \ref{assump:Existence_SDE} and \ref{assump:Uniqueness_SDE}, Propositions \ref{prop:Existence_SDE} and \ref{prop:Uniqueness_SDE} show that there is a unique weak solution, $(\widehat Z=(\widehat X,\widehat Y),\widehat W)$, $(\Omega, \{\cF_t\}_{t\geq 0},\cF,\widehat \PP^{z})$, to the Kimura stochastic differential equation \eqref{eq:Kimura_SDE}, with initial condition $\widehat Z(0)=z$. Let $\theta: S_{n,m}\rightarrow\RR^{n+m}$ be the Borel measurable vector field defined by 
\begin{equation}
\label{eq:Definition_theta}
\theta : = \sigma^{-1} \xi,
\end{equation} 
where the function $\xi:S_{n,m}\rightarrow\RR^{n+m}$ is defined by
\begin{equation}
\label{eq:Definition_xi}
\xi_i(z):=\sum_{j=1}^m f_{ij}(z)h_{ij}(x_j)\quad\hbox{for a.e. } z\in S_{n,m},\quad\forall\, i=1,\ldots,n+m.
\end{equation}
From conditions \eqref{eq:Boundedness_sigma_inverse_f} and \eqref{eq:Singularity_h}, it follows that there is a positive constant, $\Lambda$, such that
\begin{equation}
\label{eq:Upper_bound_theta}
|\theta(z)| \leq \Lambda \sum_{i=1}^n |x_i|^{-q} \quad\hbox{for a.e. } z\in S_{n,m}.
\end{equation}
Let $T$ be a positive constant. Lemma \ref{lem:Novikov_cond} together with the preceding inequality shows that condition \eqref{eq:Novikov_cond} holds, and so, \cite[Corollary 3.5.13]{KaratzasShreve1991} implies that the process $\{\widehat M(t)\}_{0\leq t\leq T}$ defined by
\begin{equation*}
\widehat M(t):=\hbox{exp}\left(\int_0^t \theta(\widehat Z(s))\cdot\, d\widehat W(s)- \frac{1}{2}\int_0^t |\theta(\widehat Z(s))|^2\, ds\right),\quad \forall t \in [0,T],
\end{equation*}
is a $\widehat \PP^{z}$-martingale. We can apply Girsanov's Theorem \cite[Theorem 3.5.1]{KaratzasShreve1991} to construct a new probability measure, $\PP^{z}$, by letting
\begin{equation}
\label{eq:Definition_probability_measure_SDE_singular}
\frac{d\PP^{z}}{d\widehat \PP^{z}} = \widehat M(T),
\end{equation}
such that the process $W(t):=\widehat W(t)-\int_0^t \theta(\widehat Z(s))\, ds$, for all $t\in [0,T]$, is a Brownian motion with respect to the probability measure $\PP^{z}$. Using \eqref{eq:Definition_theta}, we see that by letting $Z(t):=\widehat  Z(t)$, for all $t\in [0,T]$, we obtain that the process $\{Z(t), W(t)\}_{0\leq t\leq T}$, $(\Omega,\{\cF_t\}_{0\leq t\leq T}, \cF, \PP^{z})$ is a weak solution to the Kimura stochastic differential equation with singular drift \eqref{eq:Kimura_SDE_singular}, with initial condition $Z(0)=z$.
\end{step}

\begin{step}[The strong Markov property]
\label{step:Markov_SDE_singular}
Let $z\in \bar S_{n,m}$, and let $(Z,W)$, $(\Omega,\{\cF_t\}_{t\geq 0},\cF,\PP^z)$ be the weak solution to the Kimura stochastic differential equation with singular drift \eqref{eq:Kimura_SDE_singular}, with initial condition $Z(0)=z$, constructed in Step \ref{step:Existence_SDE_singular}. We now show that the process $\{Z(t)\}_{t\geq 0}$ satisfies the strong Markov property, that is, for all stopping times, $\tau$, for all $t \geq 0$, and $B\in \cB(\bar S_{n,m})$, we have that
\begin{equation}
\label{eq:Strong_Markov_singular}
\PP^z\left(Z(\tau+t) \in B|\cF_{\tau}\right) = \PP^z\left(Z(\tau+t) \in B|Z(\tau)\right)\quad \PP^z\hbox{-a.s. on } \{\tau<\infty\}.
\end{equation}
It is sufficient to prove identity \eqref{eq:Strong_Markov_singular} for all bounded stopping times in order to conclude that the strong Markov property \eqref{eq:Strong_Markov_singular} holds for arbitrary stopping times. Let $T>0$ and let $\tau$ be a stopping time such that $\tau\leq T$, $\PP^z$-a.s. We begin with the following
\begin{claim}[Change of measure and conditional expectation]
\label{claim:Change_of_measure_and_cond_exp}
For all $\cF_{\tau}$-measurable and bounded random variables, $Y,$ we have that
\begin{equation}
\label{eq:Change_of_measure_rv}
\EE_{\widehat \PP^z}\left[Y\right] = \EE_{\PP^z}\left[\frac{1}{\widehat M(\tau)}Y\right].
\end{equation}
\end{claim}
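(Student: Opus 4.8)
The plan is to recognize Claim \ref{claim:Change_of_measure_and_cond_exp} as a statement about the restriction of the Radon--Nikodym derivative $d\PP^z/d\widehat\PP^z = \widehat M(T)$ to the sub-$\sigma$-algebra $\cF_\tau$. Concretely, I would first establish the key identity
\begin{equation*}
\EE_{\widehat\PP^z}\left[\widehat M(T)\,\big|\,\cF_\tau\right] = \widehat M(\tau),
\end{equation*}
which expresses that $\widehat M(\tau)$ is the density of $\PP^z$ with respect to $\widehat\PP^z$ when both measures are restricted to $\cF_\tau$. Granting this, the claimed identity follows by a one-line substitution, so the whole proof reduces to verifying this conditional-expectation formula and then bookkeeping the integrability.

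First I would recall that, by construction in Step \ref{step:Existence_SDE_singular}, the process $\{\widehat M(t)\}_{0\le t\le T}$ is a $\widehat\PP^z$-martingale (this is exactly what Novikov's condition, via Lemma \ref{lem:Novikov_cond}, delivered); it is moreover strictly positive, being an exponential, so $1/\widehat M(\tau)$ is well defined and finite. Since $\tau$ is assumed bounded with $\tau\le T$ $\PP^z$-a.s., the optional sampling theorem applied to the martingale $\widehat M$ on $[0,T]$ gives $\EE_{\widehat\PP^z}[\widehat M(T)\mid\cF_\tau] = \widehat M(\tau)$. Next, for the given bounded $\cF_\tau$-measurable $Y$, set $V := Y/\widehat M(\tau)$, which is again $\cF_\tau$-measurable. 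Then the defining relation \eqref{eq:Definition_probability_measure_SDE_singular}, the tower property, and the $\cF_\tau$-measurability of $V$ yield
\begin{equation*}
\EE_{\PP^z}\left[\frac{1}{\widehat M(\tau)}Y\right]
= \EE_{\PP^z}[V]
= \EE_{\widehat\PP^z}\left[\widehat M(T)\,V\right]
= \EE_{\widehat\PP^z}\left[V\,\EE_{\widehat\PP^z}[\widehat M(T)\mid\cF_\tau]\right]
= \EE_{\widehat\PP^z}\left[V\,\widehat M(\tau)\right]
= \EE_{\widehat\PP^z}[Y],
\end{equation*}
which is precisely \eqref{eq:Change_of_measure_rv}.

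I expect the only genuine point requiring care to be the optional sampling step together with the attendant integrability, rather than any deep difficulty. To justify the pull-through of $\widehat M(T)$ into the conditional expectation I would note that $\EE_{\widehat\PP^z}[V\,\widehat M(T)] = \EE_{\widehat\PP^z}\big[|Y|\,\widehat M(T)/\widehat M(\tau)\big] \le \|Y\|_\infty\,\EE_{\widehat\PP^z}[\widehat M(T)/\widehat M(\tau)] = \|Y\|_\infty$, using $\EE_{\widehat\PP^z}[\widehat M(T)\mid\cF_\tau]=\widehat M(\tau)$ once more, so every expectation above is finite and the manipulations are legitimate; when $Y$ is not a priori integrable under $\PP^z$ one splits $Y = Y^+ - Y^-$ and argues on the nonnegative parts. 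Since $\widehat M$ is a uniformly integrable (indeed closed, by $\widehat M(T)$) martingale on the compact interval $[0,T]$ and $\tau\le T$ is bounded, the hypotheses of optional sampling are met without further assumptions, completing the argument.
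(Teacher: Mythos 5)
Your proof is correct, but it follows a genuinely different route from the paper's. The paper does not invoke optional sampling at all: it approximates $\tau$ from above by the dyadic stopping times $\tau_k$, applies the deterministic-time Bayes rule \cite[Lemma 3.5.3]{KaratzasShreve1991} on each event $\{\tau_k=i2^{-k}\}$ to get \eqref{eq:Change_of_measure_rv} with $\tau_k$ in place of $\tau$, and then passes to the limit $k\to\infty$ using path continuity of $\widehat M$ together with uniform integrability of $\{\widehat M^{-1}(\tau_k)Y\}_{k\geq 0}$; that uniform integrability is itself established via an $L^{1+p}$ moment bound which requires H\"older's inequality and a second appeal to the exponential estimate of Lemma \ref{lem:Novikov_cond}. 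Your argument replaces all of this by the single identity $\EE_{\widehat\PP^z}[\widehat M(T)\mid\cF_\tau]=\widehat M(\tau)$, obtained from optional sampling applied to the continuous $\widehat\PP^z$-martingale $\widehat M$ on $[0,T]$ (closed by its terminal value) at the bounded stopping time $\tau\le T$, followed by the tower property; the integrability bookkeeping via $\EE_{\widehat\PP^z}[\widehat M(T)/\widehat M(\tau)]=1$ is handled correctly, since the tower property for nonnegative variables needs no a priori integrability. The only point you leave implicit is that ``$\tau\le T$ $\PP^z$-a.s.'' transfers to $\widehat\PP^z$-a.s., which holds because $\widehat M(T)>0$ makes the two measures equivalent. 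What each approach buys: yours is shorter, avoids the discretization and the extra moment estimate entirely, and uses only the martingale property already secured by Novikov's condition; the paper's discretization argument is more hands-on and produces, as a byproduct, the quantitative $L^{1+p}$ bound \eqref{eq:Uniform_integrability} on $\widehat M^{-1}(\tau_k)$, though that bound is not used elsewhere in the paper.
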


\begin{proof}[Proof of Claim \ref{claim:Change_of_measure_and_cond_exp}]
We approximate $\tau$ by a sequence of discrete stopping times, as in \cite[Problem 1.2.24]{KaratzasShreve1991}. That is, we consider the sequence of discrete stopping times $\{\tau_k\}_{k\geq 0}$, defined by $\tau_k=i 2^{-k}$ on $\{\tau\in [(i-1)2^{-k},i2^{-k}\}$, for all $i\geq 1$. By decomposing $Y=\sum_{i=1}^{\infty} Y\mathbf{1}_{\{\tau_k=i2^{-k}\}}$ and applying \cite[Lemma 3.5.3]{KaratzasShreve1991}, we have that
\begin{equation}
\label{eq:Decomposition}
\EE_{\widehat \PP^z}\left[Y\right] = \EE_{\PP^z}\left[\sum_{i=1}^{\infty}\frac{1}{\widehat M(i2^{-k})}Y\mathbf{1}_{\{\tau_k=i2^{-k}\}}\right]
= \EE_{\PP^z}\left[\frac{1}{\widehat M(\tau_k)}Y\right],\quad\forall k \in \NN.
\end{equation}
Because $\tau_k$ converges to $\tau$, as $k\rightarrow\infty$, and the paths of the process $\{\widehat Z(t)\}_{t\geq 0}$ are continuous, we see that $\widehat M^{-1}(\tau_k)Y$ converges to $\widehat M^{-1}(\tau)Y$, as $k\rightarrow\infty$. Thus, if we prove that the sequence of random variables $\{\widehat M^{-1}(\tau_k)Y\}_{k\geq 0}$ is uniformly integrable, we can apply \cite[Theorem 16.13]{Billingsley_1986}, to conclude that
$$
\EE_{\PP^z}\left[\frac{1}{\widehat M(\tau_k)}Y\right]\rightarrow \EE_{\PP^z}\left[\frac{1}{\widehat M(\tau)}Y\right],\quad\hbox{as } k \rightarrow\infty.
$$ 
The preceding property and identity \eqref{eq:Decomposition} yield \eqref{eq:Change_of_measure_rv}. To complete the proof of the claim, it remains to show that the sequence of random variables $\{\widehat M^{-1}(\tau_k)Y\}_{k\geq 0}$ is uniformly integrable. Using the \cite[Remark following inequality (16.23)]{Billingsley_1986}, it is sufficient to prove that for some $p>0$, we have that
\begin{equation}
\label{eq:Uniform_integrability}
\sup_{k \geq 0} \EE_{\PP^z}\left[\left|\widehat M^{-1}(\tau_k)Y\right|^{1+p}\right]<\infty.
\end{equation}
Since $Y$ is a bounded random variable, without loss of generality, we may assume that $Y\equiv 1$ in the preceding inequality. From identity \eqref{eq:Definition_probability_measure_SDE_singular}, we have that
\begin{align*}
&\EE_{\PP^z}\left[\left|\widehat M^{-1}(\tau_k)Y\right|^{1+p}\right] 
= \EE_{\widehat\PP^z}\left[\left|\widehat M(\tau_k)Y\right|^{-p}\right]\\
&\qquad= \EE_{\widehat\PP^z}\left[
\hbox{exp}\left(\int_0^{\tau_k} (-p\theta(\widehat Z(s)))\cdot\, d\widehat W(s)-p^2\int_0^{\tau_k} |\theta(\widehat Z(s))|^2\, ds\right) \right.\\
&\qquad\qquad\hbox{exp}\left.\left(\left(\frac{p}{2}+p^2\right)\int_0^{\tau_k}|\theta(\widehat Z(s))|^2\, ds\right) \right].
\end{align*}
Applying H\"older's inequality, we have that
\begin{align*}
\EE_{\PP^z}\left[\left|\widehat M^{-1}(\tau_k)Y\right|^{1+p}\right] 
&\leq \EE_{\widehat\PP^z}\left[
\hbox{exp}\left(\int_0^{\tau_k} (-2p\theta(\widehat Z(s)))\cdot\, d\widehat W(s)-2p^2\int_0^{\tau_k} |\theta(\widehat Z(s))|^2\, ds\right)\right]^{1/2} \\
&\quad\EE_{\widehat\PP^z}\left[\hbox{exp}\left(\left(p+2p^2\right)\int_0^{\tau_k}|\theta(\widehat Z(s))|^2\, ds\right) \right]^{1/2}.
\end{align*}
Lemma \ref{lem:Novikov_cond} together with inequality \eqref{eq:Upper_bound_theta} gives us that the process
$$
\left\{\int_0^t (-2p\theta(\widehat Z(s)))\cdot\, d\widehat W(s)-2p^2\int_0^t |\theta(\widehat Z(s))|^2\, ds\right\}_{0\leq t\leq T+1}
$$
is a $\widehat\PP^z$-martingale, and using the fact that $\tau_k\leq T+1$, we have that
$$
\EE_{\widehat\PP^z}\left[
\hbox{exp}\left(\int_0^{\tau_k} (-2p\theta(\widehat Z(s)))\cdot\, d\widehat W(s)-2p^2\int_0^{\tau_k} |\theta(\widehat Z(s))|^2\, ds\right)\right]=1,\quad\forall k\geq 0.
$$
Thus, it follows that
\begin{align*}
\EE_{\PP^z}\left[\left|\widehat M^{-1}(\tau_k)Y\right|^{1+p}\right] 
&\leq \EE_{\widehat\PP^z}\left[\hbox{exp}\left(\left(p+2p^2\right)\int_0^T|\theta(\widehat Z(s))|^2\, ds\right) \right]^{1/2},
\end{align*}
where we used the fact that $\tau_k\leq T+1$, for all $k\geq 0$. Lemma \ref{lem:Novikov_cond} and inequality \eqref{eq:Upper_bound_theta} yield that the right-hand side of the preceding inequality is bounded. This implies that inequality \eqref{eq:Uniform_integrability} holds. This completes the proof that the sequence of random variables  $\{\widehat M^{-1}(\tau_k)Y\}_{k\geq 0}$ is uniformly integrable, and the proof of the claim.
\end{proof}

We use Claim \ref{claim:Change_of_measure_and_cond_exp} to prove that for all $t\geq 0$ and all $\cF_{\tau+t}$-measurable and bounded random variables, $Z$, we have that
\begin{equation}
\label{eq:Change_of_measure_rv_cond_exp}
\EE_{\PP^z}\left[Z|\cF_{\tau}\right] = \frac{1}{\widehat M(\tau)}\EE_{\widehat \PP^z}\left[\widehat M(\tau+t)Z\Big{|}\cF_{\tau}\right].
\end{equation}
The preceding identity gives us the analogue of \cite[Lemma 3.5.3]{KaratzasShreve1991} for general stopping times, as opposed to deterministic stopping times. To see the validity of identity \eqref{eq:Change_of_measure_rv_cond_exp}, it is sufficient to show that, for all sets $A \in \cF_{\tau}$, we have
\begin{equation}
\label{eq:Change_of_measure_rv_cond_exp_reformulation}
\int Z \mathbf{1}_A \, d\PP^z = \int \frac{1}{\widehat M(\tau)}\EE_{\widehat \PP^z}\left[\widehat M(\tau+t)Z\Big{|}\cF_{\tau}\right] \mathbf{1}_A \, d\PP^z.
\end{equation}
Applying identity \eqref{eq:Change_of_measure_rv} on the right-hand side of the preceding identity, with the choice $Y:=\EE_{\widehat \PP^z}[\widehat M(\tau+t)Z|\cF_{\tau}]$, we see that
$$
\int \frac{1}{\widehat M(\tau)}\EE_{\widehat \PP^z}\left[\widehat M(\tau+t)Z\Big{|}\cF_{\tau}\right] \mathbf{1}_A \, d\PP^z 
= \int \EE_{\widehat \PP^z}\left[\widehat M(\tau+t)Z\Big{|}\cF_{\tau}\right] \mathbf{1}_A \, d\widehat\PP^z,
$$
and using the tower property of conditional expectation on the right-hand side, it follows that
$$
\int \frac{1}{\widehat M(\tau)}\EE_{\widehat \PP^z}\left[\widehat M(\tau+t)Z\Big{|}\cF_{\tau}\right] \mathbf{1}_A \, d\PP^z 
= \int \widehat M(\tau+t)Z \mathbf{1}_A \, d\widehat\PP^z.
$$
Another application of identity \eqref{eq:Change_of_measure_rv} with $Y:=Z \mathbf{1}_A$, and $\tau$ replaced by $\tau+t$, gives us that \eqref{eq:Change_of_measure_rv_cond_exp_reformulation} holds, which implies identity \eqref{eq:Change_of_measure_rv_cond_exp}.

We now prove that the strong Markov property \eqref{eq:Strong_Markov_singular} holds. We have
\begin{align}
\PP^z\left(Z(t+\tau) \in B|\cF_{\tau}\right) & = \EE_{\PP^z}\left[\mathbf{1}_{\{Z(\tau+t)\in B\}} \big{|} \cF_{\tau}\right]\notag\\
&= \EE_{\widehat \PP^z}\left[\frac{\widehat M(\tau+t)}{\widehat M(\tau)}\mathbf{1}_{\{\widehat Z(\tau+t)\in B\}} \Big{|} \cF_{\tau}\right] 
\quad\hbox{(by \eqref{eq:Change_of_measure_rv_cond_exp})}\notag \\
\label{eq:Cond_exp_reformulation_1}
&=\EE_{\widehat \PP^z}\left[\frac{\widehat M(\tau+t)}{\widehat M(\tau)}\mathbf{1}_{\{\widehat Z(\tau+t)\in B\}} \Big{|} \widehat Z(\tau)\right],
\end{align}
where the last equality follows from the strong Markov property of the process $\{\widehat Z(t)\}_{t \geq 0}$ (Corollary \ref{cor:Strong_Markov}). We want to show that
\begin{equation}
\label{eq:Cond_exp_reformulation_2}
\EE_{\widehat \PP^z}\left[\frac{\widehat M(\tau+t)}{\widehat M(\tau)}\mathbf{1}_{\{\widehat Z(\tau+t)\in B\}} \Big{|} \widehat Z(\tau)\right] 
= \PP^z(Z(\tau+t)\in B|Z(\tau)).
\end{equation}
The preceding equality holds, if for all measurable sets $A \in \cF_{\tau}$, we have that
$$
\int \EE_{\widehat \PP^z}\left[\frac{\widehat M(\tau+t)}{\widehat M(\tau)}\mathbf{1}_{\{\widehat Z(\tau+t)\in B\}} \Big{|} \widehat Z(\tau)\right] \mathbf{1}_{\{Z(\tau)\in A\}} \, d\PP^z
=\int \mathbf{1}_{\{Z(\tau+t)\in B\}}\mathbf{1}_{\{Z(\tau)\in A\}} \, d\PP^z.
$$
From identity \eqref{eq:Change_of_measure_rv}, it follows that
\begin{align*}
&\int \EE_{\widehat \PP^z}\left[\frac{\widehat M(\tau+t)}{\widehat M(\tau)}\mathbf{1}_{\{\widehat Z(\tau+t)\in B\}} \Big{|} \widehat Z(\tau)\right] \mathbf{1}_{\{Z(\tau)\in A\}} \, d\PP^z\\
&\qquad= \int \EE_{\widehat \PP^z}\left[\frac{\widehat M(\tau+t)}{\widehat M(\tau)}\mathbf{1}_{\{\widehat Z(\tau+t)\in B\}} \Big{|} \widehat Z(\tau)\right] \mathbf{1}_{\{Z(\tau)\in A\}} \widehat M(\tau)\, d\widehat\PP^z,
\end{align*}
and, the tower property and another application of identity \eqref{eq:Change_of_measure_rv}, give us
\begin{align*}
&\int \EE_{\widehat \PP^z}\left[\frac{\widehat M(\tau+t)}{\widehat M(\tau)}\mathbf{1}_{\{\widehat Z(\tau+t)\in B\}} \Big{|} \widehat Z(\tau)\right] \mathbf{1}_{\{Z(\tau)\in A\}} \, d\PP^z\\
&\qquad= \int \widehat M(\tau+t)\mathbf{1}_{\{\widehat Z(\tau+t)\in B\}}  \mathbf{1}_{\{\widehat Z(\tau)\in A\}} \, d\widehat\PP^z\\
&\qquad= \int \mathbf{1}_{\{Z(\tau+t)\in B\}}  \mathbf{1}_{\{Z(\tau)\in A\}} \, d\PP^z.
\end{align*}
Since the preceding identity is true for all measurable sets, $A\in\cF_{\tau}$, it follows that \eqref{eq:Cond_exp_reformulation_2} holds. Identities \eqref{eq:Cond_exp_reformulation_1} and \eqref{eq:Cond_exp_reformulation_2} imply \eqref{eq:Strong_Markov_singular}, and so the process $\{Z(t)\}_{t\geq 0}$ satisfies the strong Markov property.
\end{step}
This completes the proof.
\end{proof}

\subsection{Uniqueness and the strong Markov property}
\label{subsec:Uniqueness_SDE_singular}
Using the uniqueness in law of solutions to the standard Kimura stochastic differential equation \eqref{eq:Kimura_SDE}, we can establish uniqueness in law of solutions to the Kimura stochastic differential equation with singular drift \eqref{eq:Kimura_SDE_singular}, in the class of Markov processes.

\begin{thm}[Uniqueness in law of solutions to Kimura equation with singular drift]
\label{thm:Uniqueness_SDE_singular}
Suppose that the coefficients of the Kimura stochastic differential equation with singular drift \eqref{eq:Kimura_SDE_singular} satisfy Assumption \ref{assump:Existence_SDE_singular}. Then, for all $z\in\bar S_{n,m}$, there is a unique weak solution to the stochastic differential equation \eqref{eq:Kimura_SDE_singular} that satisfies the Markov property, with initial condition $Z(0)=z$. 
\end{thm}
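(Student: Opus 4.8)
The plan is to run Girsanov's Theorem in reverse: starting from an \emph{arbitrary} weak solution of \eqref{eq:Kimura_SDE_singular} that is Markov, I will strip off the singular drift to produce a solution of the standard equation \eqref{eq:Kimura_SDE}, whose law is already pinned down by Proposition \ref{prop:Uniqueness_SDE}, and then re-apply the density to recover the law of the original solution uniquely. Concretely, let $(Z=(X,Y),W)$ on $(\Omega,\{\cF_t\}_{t\geq 0},\cF,\QQ^z)$ be any weak solution of \eqref{eq:Kimura_SDE_singular} with $Z(0)=z$ whose law is Markov, and recall from \eqref{eq:Definition_theta}--\eqref{eq:Definition_xi} the vector field $\theta=\sigma^{-1}\xi$, which satisfies the bound \eqref{eq:Upper_bound_theta}; here $\sigma$ is invertible on $S_{n,m}$ by Remark \ref{rmk:Invertibility_sigma}. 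Writing the drift of \eqref{eq:Kimura_SDE_singular} as the standard Kimura drift $\beta:=(b,e)$ plus the extra term $\varsigma(Z)\theta(Z)$, the natural candidate driving Brownian motion for the standard equation is $\widehat W(t):=W(t)+\int_0^t\theta(Z(s))\,ds$.

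First I would establish Novikov's condition \emph{under} $\QQ^z$, that is, $\sup_{z}\EE_{\QQ^z}\big[\exp(\Lambda\int_0^T\sum_{i=1}^n|X_i(t)|^{-2q}\,dt)\big]<\infty$ for every $\Lambda,T>0$. This is the analogue for the singular diffusion of Lemma \ref{lem:Novikov_cond}, and its proof should run along the same lines: one proves a Khas'minskii-type short-time estimate as in Lemma \ref{lem:Khasminskii} --- applying It\^o's rule to $\varphi(X_i^\eps)(X_i^\eps)^{1-2q}$ and using the positivity condition \eqref{eq:Positivity_cond} together with the choice \eqref{eq:Choice_q_0} of $q_0$ --- and then iterates it over successive time intervals using the Markov property of $\QQ^z$, exactly as in the proof of Lemma \ref{lem:Novikov_cond}. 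It is precisely this iteration that forces the hypothesis that the solution be Markov, and this is why uniqueness is asserted only in the class of Markov processes. Combined with \eqref{eq:Upper_bound_theta} and the elementary bound $|\theta|^2\le \Lambda^2 n\sum_i|x_i|^{-2q}$, this guarantees that $N(t):=\exp(-\int_0^t\theta(Z)\cdot dW-\tfrac12\int_0^t|\theta(Z)|^2\,ds)$ is a genuine $\QQ^z$-martingale on $[0,T]$, not merely a local martingale.

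With the martingale property in hand, Girsanov's Theorem \cite[Theorem 3.5.1]{KaratzasShreve1991} produces a probability measure $\widehat\QQ^z$ on $\cF_T$ with $d\widehat\QQ^z=N(T)\,d\QQ^z$ under which $\widehat W$ is a Brownian motion; substituting $W=\widehat W-\int_0^{\cdot}\theta(Z)\,ds$ into \eqref{eq:Kimura_SDE_singular} cancels the singular drift exactly, so that $(Z,\widehat W)$ is a weak solution of the standard equation \eqref{eq:Kimura_SDE} with $Z(0)=z$ under $\widehat\QQ^z$. By Proposition \ref{prop:Uniqueness_SDE}, the law of $Z$ under $\widehat\QQ^z$ coincides with the unique standard Kimura law $\widehat\PP^z$. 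Finally I would invert the change of measure: on $\cF_T$, $d\QQ^z=N(T)^{-1}\,d\widehat\QQ^z=\widehat M(T)\,d\widehat\QQ^z$, where $\widehat M(T)=\exp(\int_0^T\theta(Z)\cdot d\widehat W-\tfrac12\int_0^T|\theta(Z)|^2\,ds)$ is exactly the density in \eqref{eq:Definition_probability_measure_SDE_singular}. Since $\widehat M(T)$ is a fixed measurable functional of the path of $Z$ whose law under $\widehat\QQ^z=\widehat\PP^z$ is already determined, the identity $\EE_{\QQ^z}[F(Z)]=\EE_{\widehat\PP^z}[\widehat M(T)F(Z)]$ holds for every bounded path functional $F$; the right-hand side does not depend on the chosen Markov solution, and in fact reproduces the law of the solution constructed in Theorem \ref{thm:Existence_SDE_singular}. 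Hence any two Markov solutions with the same starting point agree on each $\cF_T$, and therefore on $\cF=\sigma(\bigcup_T\cF_T)$, which is the assertion.

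The main obstacle is the reverse Novikov estimate under $\QQ^z$: unlike in Lemma \ref{lem:Novikov_cond}, the underlying process now carries the singular drift, so the Khas'minskii computation must absorb the extra term $\sqrt{X_i}\,\xi_i(Z)$ in the drift of $X_i$, and one must check that the threshold \eqref{eq:Choice_q_0} for $q_0$ still leaves the crucial coefficient $\tfrac{b_0}{1+\rho}-q(n+m)K^2$ strictly positive; here the prefactor $\sqrt{X_i}$ together with $q<\tfrac14$ makes the diagonal singular contribution $X_i^{1/2-q}$ vanish at $\{x_i=0\}$, so the boundary drift is still governed by $b_0$. A secondary technical point, needed to make the final identity rigorous, is that $\widehat M(T)$ is genuinely $\sigma(Z_{[0,T]})$-measurable; this uses the interior invertibility of $\varsigma$ (equivalently of $\sigma$, Remark \ref{rmk:Invertibility_sigma}) to recover $d\widehat W=\varsigma^{-1}(Z)\big(dZ-\beta(Z)\,dt\big)$, together with the fact that, under the positivity condition \eqref{eq:Positivity_cond}, the degenerate boundary is visited on a time set of Lebesgue measure zero and hence does not contribute to the stochastic integral.
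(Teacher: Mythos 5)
Your proposal is correct and follows essentially the same route as the paper: establish Novikov's condition under the law of the singular solution via a Khas'minskii estimate iterated with the Markov property (the paper's Lemmas \ref{lem:Novikov_cond_singular} and \ref{lem:Khasminskii_singular}), apply Girsanov to strip the singular drift and reduce to the standard equation handled by Proposition \ref{prop:Uniqueness_SDE}, and then observe that the Radon--Nikodym density is a functional of the path of $Z$ alone because $\widehat W$ can be recovered from $Z$ via $\varsigma^{-1}$ off the boundary (the content of the paper's Lemma \ref{lem:Uniqueness_joint_law}). The technical points you flag --- absorbing the extra $\sqrt{X_i}\,\xi_i$ term in the It\^o computation and the negligibility of the boundary occupation time --- are exactly the ones the paper addresses.
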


We remark that Theorem \ref{thm:Uniqueness_SDE_singular} establishes uniqueness of solutions only in the class of Markov processes. The reason for this restriction is due to our method of the proof which consists in applying Girsanov's Theorem to remove the singular drift in equation \eqref{eq:Kimura_SDE_singular} and reduce our problem to a standard Kimura equation \eqref{eq:Kimura_SDE}, for which we know that uniqueness in law of solutions holds by Proposition \ref{prop:Uniqueness_SDE}. In applying Girsanov's Theorem, we need to establish the fact that the process $\{M(t)\}_{t \geq 0}$ defined in \eqref{eq:RN_derivative} is a martingale. As we can see from the proofs of Lemmas \ref{lem:Novikov_cond_singular} and \ref{lem:Khasminskii_singular}, this requires us to assume that the solution to the singular Kimura equation \eqref{eq:Kimura_SDE_singular} satisfies the Markov property. This is the reason why our method of the proof yields uniqueness of solutions only in the class of Markov processes.

We begin with the analogue of Lemma \ref{lem:Novikov_cond} for Kimura diffusions with singular drift.
\begin{lem}[Novikov's condition for Kimura diffusions with singular drift]
\label{lem:Novikov_cond_singular}
Suppose that the coefficients of the Kimura stochastic differential equation with singular drift \eqref{eq:Kimura_SDE_singular} satisfy Assumption \ref{assump:Existence_SDE_singular}. Let $q\in (0,q_0)$, where the positive constant $q_0$ is given by \eqref{eq:Choice_q_0}. Then for all $\Lambda>0$ and $T>0$, we have
\begin{equation}
\label{eq:Novikov_cond_singular}
\sup_{z\in\bar S_{n,m}}\EE_{\PP^{z}}\left[\hbox{exp}\left(\Lambda\int_0^T  \sum_{i=1}^n |X_i(t)|^{-2q}\, dt\right)\right]<\infty,
\end{equation}
where $\{Z(t)=(X(t),Y(t))\}_{t\geq 0}$ is a solution to the singular Kimura stochastic differential equation \eqref{eq:Kimura_SDE_singular} that satisfies the Markov property, with initial condition $Z(0)=z$.
\end{lem}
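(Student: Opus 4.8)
The plan is to follow the same two--tier structure as in the standard case, Lemmas \ref{lem:Khasminskii} and \ref{lem:Novikov_cond}. First I would establish a Khas'minskii-type \emph{linear} bound for the singular process over a short time horizon (the analogue of Lemma \ref{lem:Khasminskii}, i.e. Lemma \ref{lem:Khasminskii_singular}): for every $\delta\in(0,1)$ there is a small $T_\delta>0$ such that $\sup_{z}\EE_{\PP^z}\big[\int_0^{T_\delta}\sum_{i=1}^n|X_i(t)|^{-2q}\,dt\big]<\delta$. Granting this, the second tier is \emph{verbatim} the argument in the proof of Lemma \ref{lem:Novikov_cond}: the linear bound upgrades to $\sup_z\EE_{\PP^z}\big[\exp\big(\int_0^{T_\delta}\Lambda\sum_{i=1}^n|X_i(t)|^{-2q}\,dt\big)\big]<1/(1-\delta)$, and then splitting $\int_0^T=\int_0^{T_{k-1}}+\int_{T_{k-1}}^{T_k}$, conditioning on $\cF_{T_{k-1}}$ and invoking the \emph{Markov property} of $\{Z(t)\}_{t\geq 0}$ assumed in the statement, one obtains $\EE_{\PP^z}\big[\exp\big(\int_0^T\Lambda\sum_i|X_i(t)|^{-2q}\,dt\big)\big]\leq(1-\delta)^{-\lceil T/T_\delta\rceil}$, which is \eqref{eq:Novikov_cond_singular}. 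Since this tier uses only the Markov property, it is here that the hypothesis is needed.

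The real work is thus Lemma \ref{lem:Khasminskii_singular}, and I would reproduce the computation of Lemma \ref{lem:Khasminskii} applying It\^o's rule to $\varphi(X_i^\eps(t))(X_i^\eps(t))^{1-2q}$, where $X_i^\eps:=X_i+\eps$ and $\varphi$ is the same cut-off localizing near $\{x_i=0\}$. Relative to \eqref{eq:Kimura_SDE}, the drift of $X_i$ now carries the additional term $\sqrt{X_i}\sum_{j=1}^n f_{ij}(Z)h_{ij}(X_j)$, so the \emph{only} genuinely new contribution to the It\^o expansion is
$$(1-2q)\,\varphi(X_i^\eps)(X_i^\eps)^{-2q}\sqrt{X_i}\sum_{j=1}^n f_{ij}(Z)h_{ij}(X_j),$$
together with a harmless $\varphi'$-term supported on $\{X_i^\eps\in[r/2,r]\}$, where all factors are bounded. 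The principal drift $b_i$ and the diffusion coefficient $X_i|\sigma_i|^2$ are unchanged, so they reproduce exactly the expression of \eqref{eq:Lower_bound} with the positive coefficient $(1-2q)\big(\tfrac{b_0}{1+\rho}-q(n+m)K^2\big)$, strictly positive because $q<q_0\leq b_0/((n+m)K^2)$; and the stochastic integral is again a martingale, since for fixed $\eps$ the integrand is bounded ($\sqrt{X_i}\sigma_i$ is bounded by Remark \ref{rmk:Boundedness_coeff_varsigma} and $(X_i^\eps)^{-2q}\leq\eps^{-2q}$).

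The main obstacle, and the point where $q_0\leq 1/4$ in \eqref{eq:Choice_q_0} is used, is bounding the new term. On $\supp\varphi$ one has $X_i^\eps\leq r$ and $\sqrt{X_i}\leq\sqrt{X_i^\eps}$, so, by \eqref{eq:Boundedness_f} and \eqref{eq:Singularity_h}, its absolute value is at most
$$(1-2q)(r+\eps)^{1/2-2q}K_0^2\sum_{j=1}^n X_j^{-q}\leq (1-2q)(r+\eps)^{1/2-2q}K_0^2\Big(n+\sum_{j=1}^n X_j^{-2q}\Big),$$
where $1/2-2q>0$ because $q<1/4$ and I used $X_j^{-q}\leq 1+X_j^{-2q}$. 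Taking expectations, summing over $i=1,\dots,n$, and splitting each $\int_0^T X_j^{-2q}\,dt$ into its parts on $\{X_j\leq r/2\}$ and $\{X_j>r/2\}$ (the latter at most $(r/2)^{-2q}T$), the singular quantity $\sum_j\EE_{\PP^z}[\int_0^T X_j^{-2q}\,dt]$ reappears on the right with prefactor $nK_0^2(1-2q)(r+\eps)^{1/2-2q}$. Choosing $r$ (and letting $\eps\to0$) small enough makes this prefactor less than half the positive coefficient above, so the term is \emph{absorbed} into the left-hand side, yielding the analogue of \eqref{eq:Inequality_X_power}. Finally, choosing first $r$ small (to make the boundary term $O(r^{1-2q})$ small) and then $T$ small (to make the remaining $O(r^{-2q}T)$ and $O(r^{1/2-4q}T)$ terms small) gives the bound $<\delta$, completing Lemma \ref{lem:Khasminskii_singular}; the rest is identical to Lemmas \ref{lem:Khasminskii} and \ref{lem:Novikov_cond}.
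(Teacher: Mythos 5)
Your proposal follows the paper's proof essentially verbatim: the same two-tier structure (a Khas'minskii-type linear bound obtained by applying It\^o's rule to $\varphi(X_i^{\eps}(t))(X_i^{\eps}(t))^{1-2q}$ with the same cut-off, followed by the iteration over time intervals of length $T_\delta$ using the assumed Markov property), with the new singular drift contribution bounded by $C\,r^{1/2-2q}\sum_j X_j^{-q}$ via \eqref{eq:Boundedness_f}, \eqref{eq:Singularity_h} and $q<1/4$, and then absorbed into the left-hand side exactly as in the paper's Lemma \ref{lem:Khasminskii_singular}. The one place where the paper is more careful than your write-up is the order of operations in the absorption step: it lets $\eps\to 0$ and performs the absorption \emph{pathwise} (using that the drift integral of a weak solution is a.s.\ finite, so the quantity being subtracted from both sides is finite a.s.), and only afterwards takes expectations of the already-absorbed inequality; absorbing after taking expectations, as you phrase it, would require knowing a priori that $\EE_{\PP^{z}}\left[\int_0^T |X_j(t)|^{-2q}\,dt\right]<\infty$, which is precisely what is being proved.
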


We prove Lemma \ref{lem:Novikov_cond_singular} with the aid of the analogue of Lemma \ref{lem:Khasminskii} for the Kimura stochastic differential equation with singular drift.
\begin{lem}[Verification of the hypotheses of Khas'minskii's Lemma for singular Kimura diffusions]
\label{lem:Khasminskii_singular}
Suppose that the coefficients of the Kimura stochastic differential equation with singular drift \eqref{eq:Kimura_SDE_singular} satisfy Assumption \ref{assump:Existence_SDE_singular}. Let $q\in (0,q_0)$, where the positive constant $q_0$ is given by \eqref{eq:Choice_q_0}. Then for all positive constants, $\delta\in (0,1)$ and $\Lambda$, there is a positive constant, $T=T(b_0,\delta,K_0,K,\Lambda,m,n,q)$, such that
\begin{equation}
\label{eq:Khasminskii_singular}
\sup_{z\in\bar S_{n,m}}\EE_{\PP^{z}}\left[\Lambda\int_0^T  \sum_{i=1}^n |X_i(t)|^{-2q}\, dt\right]<\delta,
\end{equation}
where $\{Z(t)=(X(t),Y(t))\}_{t\geq 0}$ is a solution to the singular Kimura stochastic differential equation \eqref{eq:Kimura_SDE_singular}, with initial condition $Z(0)=z$.
\end{lem}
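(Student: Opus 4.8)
The plan is to follow the proof of Lemma~\ref{lem:Khasminskii} closely, but to run the It\^o computation under the measure $\PP^z$ of the given singular solution, so that the drift of $X_i$ now carries the extra term $\sqrt{X_i(t)}\sum_{j=1}^n f_{ij}(Z(t))h_{ij}(X_j(t))$. Concretely, I would apply It\^o's rule to $\varphi(X_i^\eps(t))(X_i^\eps(t))^{1-2q}$, where $X_i^\eps:=X_i+\eps$ and $\varphi$ is the same cut-off function used before. The terms not involving the singular drift are treated exactly as in Lemma~\ref{lem:Khasminskii}: the positivity condition \eqref{eq:Positivity_cond} together with the continuity of $b_i$ and the bound $|\sigma_i|^2\le (n+m)K^2$ from \eqref{eq:Boundedness_coeff_SDE} produce the interior term
\[
(1-2q)\varphi(X_i^\eps)(X_i^\eps)^{-2q}\left(b_i-q|\sigma_i|^2\frac{X_i}{X_i^\eps}\right)\ge (1-2q)c_0\,\varphi(X_i^\eps)(X_i^\eps)^{-2q},
\]
with $c_0=\frac{b_0}{1+\rho}-q(n+m)K^2>0$ for $q<q_0$, while the $\varphi',\varphi''$ terms contribute the familiar $O(r^{-2q}T)$ boundary error and the stochastic integral is a martingale because $\sqrt{X_i}\,\sigma_i$ is bounded.

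The new ingredient is the singular-drift contribution, and the key observation is that the prefactor $\sqrt{X_i}$ exactly tames the weight $(X_i^\eps)^{-2q}$. Since $q<1/4\le q_0$ we have $1/2-2q>0$, so on the support of $\varphi$,
\[
\sqrt{X_i}\,(X_i^\eps)^{-2q}\le (X_i^\eps)^{1/2-2q}\le (r+\eps)^{1/2-2q}.
\]
Using $|f|\le K_0$ and $|h_{ij}(s)|\le K_0 s^{-q}$ from \eqref{eq:Boundedness_f} and \eqref{eq:Singularity_h}, both the interior ($\varphi$) and boundary ($\varphi'$) singular contributions are dominated in absolute value by a constant multiple of $r^{1/2-2q}K_0^2\sum_{j=1}^n X_j^{-q}$. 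Splitting $X_j^{-q}\le X_j^{-2q}\mathbf{1}_{\{X_j\le r/2\}}+(r/2)^{-q}$ and taking expectations, this yields (writing $A_i:=\EE_{\PP^z}[\int_0^T X_i^{-2q}\mathbf{1}_{\{X_i\le r/2\}}\,dt]$) a self-referential system
\[
(1-2q)c_0\,A_i\le r^{1-2q}+Cr^{-2q}T+Cr^{1/2-2q}\sum_{j=1}^n\left(A_j+(r/2)^{-q}T\right),\qquad i=1,\dots,n.
\]

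The crux is to close this system. Summing over $i$ and fixing $\rho=\rho_0$ so that $c_0>0$, I would choose $r$ so small that $Cn\,r^{1/2-2q}\le \tfrac12(1-2q)c_0$; because $1/2-2q>0$ this is possible, and it allows the coupling term $Cr^{1/2-2q}\sum_j A_j$ to be absorbed into the left-hand side, giving $\sum_i A_i\le \frac{2}{(1-2q)c_0}(nr^{1-2q}+C'T)$. Adding the elementary bound $(r/2)^{-2q}T$ for the complementary region $\{X_i>r/2\}$ then produces an estimate for $\sum_i\EE_{\PP^z}[\int_0^T X_i^{-2q}\,dt]$ of the form $\frac{2nr^{1-2q}}{(1-2q)c_0}+C''T$. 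Exactly as in Lemma~\ref{lem:Khasminskii}, I first shrink $r$ (respecting the absorption constraint) so that $\Lambda$ times the $r^{1-2q}$ term is $<\delta/2$, and then shrink $T$ so that $\Lambda C''T<\delta/2$, which is \eqref{eq:Khasminskii_singular}.

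I expect the absorption step to be the main obstacle: the drift of $X_i$ couples to every coordinate through $h_{ij}(X_j)$, so the naive estimate is circular, and it is only the gain $r^{1/2-2q}$, available precisely because $q_0\le 1/4$ forces $q<1/4$, that makes the coupling coefficient small enough to solve for $\sum_i A_i$. A secondary technical point is to justify the It\^o identity and the vanishing of the stochastic integral despite the a priori unknown finiteness of $\int_0^T X_j^{-q}\,dt$; I would handle this by stopping at $\tau_R:=\inf\{t:\sum_{j}\int_0^t X_j(s)^{-2q}\,ds\ge R\}$, carrying out the estimate on $[0,T\wedge\tau_R]$ where every integrand is bounded, and noting that the resulting closed inequality is uniform in $R$. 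Letting $R\to\infty$, monotone convergence upgrades it to the full interval $[0,T]$ and simultaneously shows $\PP^z(\tau_R<T)\le M/R\to0$, so that $\int_0^T\sum_j X_j^{-2q}\,dt<\infty$ a.s.
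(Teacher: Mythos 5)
Your proposal is correct and follows essentially the same route as the paper: It\^o's rule applied to $\varphi(X_i^\eps)(X_i^\eps)^{1-2q}$, the coercive interior term coming from the positivity condition \eqref{eq:Positivity_cond} and the choice $q<q_0$, the crucial gain $r^{1/2-2q}$ from pairing $\sqrt{X_i}$ against $(X_i^\eps)^{-2q}$, and absorption of the resulting self-coupled term into the left-hand side for $r$ small. Your localization by $\tau_R$ is a slightly more careful justification of the martingale/finiteness step than the paper's, but it is the same argument in substance.
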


\begin{proof}
The proof of Lemma \ref{lem:Khasminskii_singular} is similar to that of Lemma \ref{lem:Khasminskii}, but we have to pay closer attention to the singular component of the drift coefficient in the stochastic differential equation \eqref{eq:Kimura_SDE_singular}. We let the positive constants $\rho$, $r$ and the cut-off function $\varphi$ be as in the proof of Lemma \ref{eq:Khasminskii}. Without loss of generality, we may assume that $\Lambda=1$. We consider the auxiliary function
\begin{equation*}
\psi(x)=\sum_{i=1}^n x_i^{1-2q}\varphi(x_i) ,\quad\forall\, x\in\RR^n_+.
\end{equation*}
For all $\eps\in (0,1)$, we recall that we denote $X^{\eps}_i(t)=X_i(t)+\eps$ and $x_i^{\eps}=x_i+\eps$. Applying It\^o's rule to the process $\{\psi(X^{\eps}(t))\}_{t\geq 0}$, we obtain, for all $T>0$ and $\eps>0$,
\begin{equation}
\label{eq:Ito_rule_power_function_prim}
\begin{aligned}
&\psi(X^{\eps}(T)) 
= \psi(x^{\eps}) + \sum_{i=1}^n \int_0^T b_i(Z(t))\varphi'(X^{\eps}_i(t))(X^{\eps}_i(t))^{1-2q}\, dt\\
&\quad + \sum_{i=1}^n\int_0^T \frac{|\sigma_i(Z(t))|^2}{2}X_i(t) (X^{\eps}_i(t))^{-2q}\left(X^{\eps}_i(t)\varphi''(X^{\eps}_i(t))  
+2(1-2q)\varphi'(X^{\eps}_i(t))\right)\, dt\\
&\quad+(1-2q) \sum_{i=1}^n\int_0^T \varphi(X^{\eps}_i(t))(X^{\eps}_i(t))^{-2q} \left(b_i(Z(t))- q|\sigma_i(Z(t))|^2\frac{X_i(t)}{X^{\eps}_i(t)}\right)\, dt\\
&\quad+(1-2q)\sum_{i=1}^n \int_0^T \varphi(X^{\eps}_i(t))(X^{\eps}_i(t))^{-2q}\sqrt{X_i(t)} \sum_{j=1}^nf_{ij}(Z(t))h_{ij}(X_j(t))\, dt\\
&\quad+\sum_{i=1}^n \int_0^T (X^{\eps}_i(t))^{1-2q}\varphi'(X^{\eps}_i(t)) \sqrt{X_i(t)}\sum_{j=1}^n f_{ij}(Z(t))h_{ij}(X_j(t))\, dt\\
&\quad+ \sum_{i=1}^n\int_0^T (X^{\eps}_i(t))^{-2q}\left(X^{\eps}_i(t)\varphi'(X^{\eps}_i(t)) +(1-2q)\varphi(X^{\eps}_i(t))\right)\sqrt{X_i(t)}\sigma_i(Z(t))\cdot\, d W(t),
\end{aligned}
\end{equation}
where we recall that $\sigma_i(z)$ denotes the $i$-th row of the matrix $(\sigma(z))$. By Remark \ref{rmk:Boundedness_coeff_SDE}, the coefficient functions $b(z)$ and $(\sigma(z))$ are bounded, and using the properties of the cut-off function $\varphi$, there is a positive constant, $C=C(K,m,n)$, such that
\begin{equation}
\label{eq:Ito_rule_power_function_prim_2}
\begin{aligned}
&\psi(x^{\eps}) + \sum_{i=1}^n \int_0^T b_i(Z(t))\varphi'(X^{\eps}_i(t))(X^{\eps}_i(t))^{1-2q}\, dt\\
&+ \sum_{i=1}^n\int_0^T \frac{|\sigma_i(Z(t))|^2}{2}X_i(t) (X^{\eps}_i(t))^{-2q}\left(X^{\eps}_i(t)\varphi''(X^{\eps}_i(t))  
+2(1-2q)\varphi'(X^{\eps}_i(t))\right)\, dt\\
&\leq nr^{1-2q} + Cr^{-2q}T,\quad\forall\, \eps>0.
\end{aligned}
\end{equation}
Inequality \eqref{eq:Lower_bound}, together with \eqref{eq:Ito_rule_power_function_prim}, \eqref{eq:Ito_rule_power_function_prim_2} and the fact that the cut-off function $\varphi$ has support in $[0,r]$, yields
\begin{equation}
\label{eq:Inequality_1}
\begin{aligned}
&C_0\sum_{i=1}^n\int_0^T \left|X^{\eps}_i(t)\right|^{-q}\mathbf{1}_{\{X_i(t)\in [0,r/2]\}}\,dt
\leq  2nr^{1-2q} + Cr^{-2q}T \\
&\qquad+r^{-2q}\sum_{i=1}^n\int_0^{T} \sqrt{X_i(t)} \mathbf{1}_{\{X_i(t)\in[0,r]\}}\left|\sum_{j=1}^n f_{ij}(Z(t))h_{ij}(X_j(t))\right|\, dt\\
&\qquad- \sum_{i=1}^n\int_0^T \sqrt{X_i(t)}(X^{\eps}_i(t))^{-2q}\left(X^{\eps}_i(t)\varphi'(X^{\eps}_i(t)) +(1-2q)\varphi(X^{\eps}_i(t))\right)\sigma_i(Z(t))\cdot\, d W(t),
\end{aligned}
\end{equation}
for all $\eps>0$, where we denote for brevity
$$
C_0:=(1-2q)\left(\frac{b_0}{1+\rho}-q(n+m)K^2\right).
$$
Note that the $dt$-integral term on the right-hand side of inequality \eqref{eq:Inequality_1} is finite, from our assumption that the process $\{Z(t)\}_{t\geq 0}$ is a weak solution to equation \eqref{eq:Kimura_SDE_singular}, which implies that
$$
\int_0^{T} \sqrt{X_i(t)} \left|\sum_{j=1}^n f_{ij}(Z(t))h_{ij}(X_j(t))\right|\, dt<\infty,\quad\PP^z\hbox{-a.s.}
$$
Moreover using the fact that $q< 1/4$, from \eqref{eq:Choice_q_0}, we see that $\sqrt{x_i}(x^{\eps}_i)^{-2q}$ is bounded as $\eps\downarrow 0$. From Remark \ref{rmk:Boundedness_coeff_SDE}, it follows that the matrix coefficient $(\sigma(z))$ is bounded, and so, the $dW(t)$-integral on the right-hand side of inequality \eqref{eq:Inequality_1} converges, when we take limit as $\eps\downarrow 0$. Inequality \eqref{eq:Inequality_1} becomes, as $\eps\downarrow 0$,
\begin{align*}
&C_0\sum_{i=1}^n\int_0^T \left|X_i(t)\right|^{-2q}\mathbf{1}_{\{X_i(t)\in [0,r/2]\}}\,dt  
\leq  2nr^{1-2q}+ Cr^{-2q} T \\
&\qquad+r^{-2q}\sum_{i=1}^n\int_0^{T} \sqrt{X_i(t)} \mathbf{1}_{\{X_i(t)\in[0,r]\}}\left|\sum_{j=1}^n f_{ij}(Z(t))h_{ij}(X_j(t))\right|\, dt\\
&\qquad- \sum_{i=1}^n\int_0^T \sqrt{X_i(t)}(X_i(t))^{-2q}\left(X_i(t)\varphi'(X_i(t)) +(1-2q)\varphi(X_i(t))\right)\sigma_i(Z(t))\cdot\, d W(t),
\end{align*}
and so, the integral on the left-hand side of the preceding inequality is finite. We may now use the upper bounds \eqref{eq:Boundedness_coeff_SDE}, \eqref{eq:Boundedness_f} and \eqref{eq:Singularity_h}, to conclude that there is a positive constant, $C=C(K_0,K,m,n)$, such that
\begin{align*}
&\sum_{i=1}^n\int_0^T \left|X_i(t)\right|^{-2q}\mathbf{1}_{\{X_i(t)\in [0,r/2]\}}\,dt  
\leq  \frac{2nr^{1-2q}+ Cr^{-2q} T}{C_0}\\
&\qquad +\frac{Cr^{1/2-2q}}{C_0}\sum_{i=1}^n\int_0^{T} |X_i(t)|^{-q}\mathbf{1}_{\{X_i(t)\in [0,r/2]\}}\, dt\\
&\qquad- \frac{1}{C_0}\sum_{i=1}^n\int_0^T \sqrt{X_i(t)}(X_i(t))^{-2q}\left(X_i(t)\varphi'(X_i(t)) +(1-2q)\varphi(X_i(t))\right)\sigma_i(Z(t))\cdot\, d W(t),
\end{align*}
Because we choose $q< 1/4$, from \eqref{eq:Choice_q_0}, we may choose a positive constant, $r_1=r_1(C_0,C,q)$, small enough so that $Cr_1^{1/2-2q}/C_0\leq 1/2$. The preceding inequality gives us that
\begin{align*}
&\sum_{i=1}^n\int_0^T \left|X_i(t)\right|^{-2q}\mathbf{1}_{\{X_i(t)\in [0,r/2]\}}\,dt  
\leq  \frac{4nr^{1-2q}+ 2Cr^{-2q} T}{C_0}\\
&\qquad- \frac{2}{C_0}\sum_{i=1}^n\int_0^T \sqrt{X_i(t)}(X_i(t))^{-2q}\left(X_i(t)\varphi'(X_i(t)) +(1-2q)\varphi(X_i(t))\right)\sigma_i(Z(t))\cdot\, d W(t),
\end{align*}
for all $r\in (0,r_1)$. Because the $dW(t)$-term in the preceding equality defines a martingale, we may take expectation in the preceding inequality to obtain, for all $T>0$ and $r\in (0,r_1)$,
\begin{align*}
\EE_{\PP^z}\left[\int_0^T \sum_{i=1}^n\left|X_i(t)\right|^{-2q}\mathbf{1}_{\{X_i(t)\in [0,r/2]\}}\,dt\right]
&\leq \frac{2nr^{1-2q}+ Cr^{-2q} T}{C_0}.
\end{align*}
Removing the indicator function in the preceding inequality, we obtain 
\begin{align*}
\EE_{\PP^z}\left[\int_0^T \sum_{i=1}^n\left|X_i(t)\right|^{-2q}\,dt\right]
&\leq \frac{2nr^{1-2q}+ Cr^{-2q} T}{C_0}+nr^{-2q}T.
\end{align*}
For all $\delta\in (0,1)$, we may now choose $r=r(b_0,\delta,K_0,K,m,n,q)$ and $T=T(b_0,\delta,K_0,K,m,n,q)$ small enough so that inequality \eqref{eq:Khasminskii_singular} holds. This completes the proof.
\end{proof}

We can now give the
\begin{proof}[Proof of Lemma \ref{lem:Novikov_cond_singular}]
The proof of Lemma \ref{lem:Novikov_cond_singular} is identical to that of Lemma \ref{lem:Novikov_cond}, only in place of Lemma \ref{lem:Khasminskii} we use Lemma \ref{lem:Khasminskii_singular}, and so, we omit the detailed proof.
\end{proof}

To prove Theorem \ref{thm:Uniqueness_SDE_singular}, in addition to Lemma \ref{lem:Novikov_cond_singular}, we need the following result which proves uniqueness in law of the joint probability distributions of any weak solution $\{(\widehat Z(t),\widehat W(t))\}_{t\geq 0}$, to the standard Kimura stochastic differential equation, \eqref{eq:Kimura_SDE}.

\begin{lem}[Uniqueness of the joint law of weak solutions $(\widehat Z,\widehat W)$ to the standard Kimura equation]
\label{lem:Uniqueness_joint_law}
Suppose that Assumption \ref{assump:Uniqueness_SDE} holds. Let $z\in\bar S_{n,m}$, and let $(\widehat Z^i,\widehat W^i)$, $(\Omega^i,\{\cF^i_t\}_{t\geq 0}, \cF^i,\widehat \PP^z_i)$, for $i=1,2$, be two weak solutions to the standard Kimura equation \eqref{eq:Kimura_SDE}, with initial condition $\widehat Z^1(0)=\widehat Z^2(0)=z$. Then the joint probability laws of the processes $(\widehat Z^i,\widehat W^i)$, for $i=1,2$, agree.
\end{lem}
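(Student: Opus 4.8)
The plan is to reconstruct the driving Brownian motion $\widehat W^i$ from the solution path $\widehat Z^i$, up to a degenerate "invisible" component whose conditional law is universal, and then invoke the uniqueness of the law of $\widehat Z^i$ from Proposition~\ref{prop:Uniqueness_SDE}. Since the joint law of the pair is determined by the mixed characteristic functions $\EE_{\widehat\PP^z_i}[\exp(\bi\sum_j\xi_j\cdot\widehat Z^i(t_j)+\bi\int_0^T\phi(s)\cdot d\widehat W^i(s))]$, where $\phi$ may be taken a step function so that $\int_0^T\phi\,d\widehat W^i=\sum_l\eta_l\widehat W^i(s_l)$, it suffices to show that each such quantity depends only on the common law of $\widehat Z^i$. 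Thus the whole problem reduces to showing that the conditional law of $\widehat W^i$ given $\widehat Z^i$ is the same functional of $\widehat Z^i$ for $i=1,2$.

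First I would introduce the continuous martingale part $M^i(t):=\int_0^t\varsigma(\widehat Z^i(s))\,d\widehat W^i(s)$, which by \eqref{eq:Kimura_SDE} equals $\widehat Z^i(t)-z-\int_0^t\beta(\widehat Z^i(s))\,ds$ (with $\beta=(b,e)$) and is therefore a measurable functional of the path of $\widehat Z^i$, as is its bracket $\int_0^t(\varsigma\varsigma^*)(\widehat Z^i)\,ds$. By Remark~\ref{rmk:Invertibility_sigma} and \eqref{eq:varsigma}, $\varsigma$ is invertible on the interior $S_{n,m}$ and degenerates only on $\partial S_{n,m}$; letting $\varsigma^{+}$ denote the Moore--Penrose pseudoinverse, I decompose
$$
\widehat W^i(t)=A^i(t)+B^i(t),\qquad A^i(t):=\int_0^t\varsigma^{+}(\widehat Z^i)\,dM^i,\quad B^i(t):=\int_0^t(I-\varsigma^{+}\varsigma)(\widehat Z^i)\,d\widehat W^i.
$$
Here $A^i$ is again a path functional of $\widehat Z^i$, because for the fixed law of $\widehat Z^i$ the It\^o integral of a $\widehat Z^i$-adapted integrand against the $\widehat Z^i$-functional $M^i$ is a measurable map of its path. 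The projector $I-\varsigma^{+}\varsigma$ onto $\ker\varsigma$ vanishes on $S_{n,m}$ and is supported on the degenerate set $\partial S_{n,m}$, so $B^i$ carries exactly the Brownian increments invisible to $\widehat Z^i$. Using the Penrose identity $\varsigma\varsigma^{+}\varsigma=\varsigma$ one checks that $\langle M^i,B^i\rangle\equiv 0$, while $\langle B^i\rangle_t=\int_0^t(I-\varsigma^{+}\varsigma)(\widehat Z^i)\,ds$ is $\widehat Z^i$-measurable.

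The crux is to show that, conditionally on $\widehat Z^i$, the martingale $B^i$ is centered Gaussian with the above covariance, so that its conditional characteristic functional is a universal function of $\widehat Z^i$. For fixed deterministic $\phi$ let $\mathcal{E}^i_t:=\exp(\bi\int_0^t\phi\,dB^i+\tfrac12\int_0^t\phi^*(I-\varsigma^{+}\varsigma)(\widehat Z^i)\phi\,ds)$; since $I-\varsigma^{+}\varsigma$ has norm at most one, $\mathcal{E}^i$ is a bounded, hence true, martingale with $\mathcal{E}^i_0=1$ and $\mathcal{E}^i_T-1=\bi\int_0^T\mathcal{E}^i_s\phi_s\cdot dB^i_s$, and $\exp(\bi\int_0^T\phi\,dB^i)=\mathcal{E}^i_T\,G(\widehat Z^i)$ with $G(\widehat Z^i):=\exp(-\tfrac12\int_0^T\phi^*(I-\varsigma^{+}\varsigma)(\widehat Z^i)\phi\,ds)$. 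It then remains to prove $\EE_{\widehat\PP^z_i}[H(\widehat Z^i)(\mathcal{E}^i_T-1)]=0$ for every bounded measurable $H$. This is where I would invoke the predictable representation property attached to a well-posed martingale problem: because Proposition~\ref{prop:Uniqueness_SDE} shows the martingale problem for $\widehat L$ is well-posed, every bounded martingale for the natural filtration of $\widehat Z^i$---in particular the Doob martingale of $H(\widehat Z^i)$---is a stochastic integral $\int\eta\,dM^i$ against the fundamental martingales $M^i$ (see \cite{Stroock_Varadhan}). Combined with $\langle M^i,B^i\rangle\equiv 0$, the bracket between $\int\eta\,dM^i$ and $\int\mathcal{E}^i\phi\,dB^i$ vanishes, which gives the claim.

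Consequently $\EE_{\widehat\PP^z_i}[F(\widehat Z^i)\exp(\bi\int_0^T\phi\,dB^i)]=\EE_{\widehat\PP^z_i}[F(\widehat Z^i)G(\widehat Z^i)]$; since $A^i$ and $G$ are path functionals of $\widehat Z^i$, absorbing $\exp(\bi\int_0^T\phi\,dA^i)$ into $F$ shows that the full mixed characteristic function $\EE_{\widehat\PP^z_i}[F(\widehat Z^i)\exp(\bi\int_0^T\phi\,d\widehat W^i)]$ depends only on the law of $\widehat Z^i$. By Proposition~\ref{prop:Uniqueness_SDE} this law is common to $i=1,2$, whence the joint laws of $(\widehat Z^i,\widehat W^i)$ agree. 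I expect the main obstacle to be precisely this crux step: justifying the predictable representation property in the present degenerate, unbounded-domain setting, together with the measurability of the stochastic integral $A^i$ as a functional of the $\widehat Z^i$-path, so that the regular conditional law of $\widehat W^i$ given $\widehat Z^i$ is genuinely intrinsic. The orthogonality bookkeeping and the Gaussian computation are then routine.
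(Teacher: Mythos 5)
Your proposal is correct in outline but takes a genuinely different, and considerably heavier, route than the paper. The paper also begins by writing $\widehat W^i$ in terms of the martingale part $\widehat N^i$ of $\widehat Z^i$, but it then observes that the ``invisible'' component you call $B^i$ is in fact identically zero: by Lemma \ref{lem:Novikov_cond} (equivalently, the Khas'minskii estimate of Lemma \ref{lem:Khasminskii}), the quantity in \eqref{eq:Novikov_cond} is finite, so $\EE_{\widehat\PP^z_i}\bigl[\int_0^t \mathbf{1}_{\{\widehat Z^i(s)\in\partial S_{n,m}\}}\,ds\bigr]=0$; the process spends zero Lebesgue time on the degenerate set, hence $\int_0^t \mathbf{1}_{\{\widehat Z^i(s)\in\partial S_{n,m}\}}\,d\widehat W^i(s)=0$ and $\widehat W^i=\int_0^t\varsigma^{-1}(\widehat Z^i)\mathbf{1}_{\{\widehat Z^i\in S_{n,m}\}}\,d\widehat N^i$ is an honest measurable functional of the path of $\widehat Z^i$, as in \eqref{eq:BM_in_terms_of_Z}. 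This collapses your entire crux step --- the conditional Gaussianity of $B^i$ and the predictable representation property --- since $I-\varsigma^{+}\varsigma$ vanishes along the path for a.e.\ $t$, a.s. What your approach buys is generality: it would handle the situation where the solution can occupy the boundary for positive time (e.g.\ if some $b_i$ vanishes on $\{x_i=0\}$ and the corresponding coordinate sticks), a case the paper's argument cannot reach; note that the zero--occupation-time fact genuinely requires the positivity condition \eqref{eq:Positivity_cond}, which is part of Assumption \ref{assump:Existence_SDE_singular} in the setting where the lemma is applied. The price is that your crux is not yet a proof: the representation of every bounded $\sigma(\widehat Z^i)$-martingale as $\int\eta\,dM^i$ is the Jacod--Yor extremality/predictable-representation theorem for well-posed martingale problems rather than anything in Stroock--Varadhan per se, and both it and the measurability of $A^i=\int\varsigma^{+}(\widehat Z^i)\,dM^i$ as an intrinsic path functional (the integrand $\varsigma^{+}$ is discontinuous at $\partial S_{n,m}$) would need to be established carefully in this degenerate setting. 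You correctly flag these as the main obstacles; the paper's proof is designed precisely to avoid them.
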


\begin{proof}
From Proposition \ref{prop:Uniqueness_SDE}, it follows that the probability laws of the processes $\{\widehat Z^i(t)\}_{t\geq 0}$, for $i=1,2$, agree. For $i=1,2$, we consider the $(n+m)$-dimensional processes defined by
\begin{equation}
\label{eq:Definition_process_N}
\begin{aligned}
\widehat N^i_j(t) &:= \widehat X^i_j(t) - \widehat X^i_j(0)-\int_0^t \widehat b_j(\widehat Z^i(s))\, ds,\quad\forall\, j=1,\ldots,n,\\
\widehat N^i_{n+l}(t) &:= \widehat Y^i_l(t)-\widehat Y^i_l(0)-\int_0^t e_l(\widehat Z^i(s))\, ds,\quad\forall\, l=1,\ldots,m.
\end{aligned}
\end{equation}
Our goal is to prove that the following identity holds
\begin{equation}
\label{eq:BM_in_terms_of_Z}
\widehat W^i(t) = \int_0^t \varsigma^{-1}(\widehat Z^i(s))\mathbf{1}_{\{\widehat Z^i(s)\in S_{n,m}\}}  d\widehat N^i(s),\quad\forall\, t\geq 0,
\end{equation}
$\widehat\PP^z_i$-a.s, for $i=1,2$. Notice that on the right-hand side of the preceding identity we used the invertibility of the matrix function $(\varsigma(z))$, for all $z\in S_{n,m}$. This follows from identity \eqref{eq:Diffusion_matrix} and the fact that the matrix $(D(z))$ is invertible, by Remark \ref{rmk:Invertibility_sigma}. Property \eqref{eq:BM_in_terms_of_Z} and \eqref{eq:Definition_process_N}, together with the fact that the probability laws of the processes $\{\widehat Z^i(t)\}_{t\geq 0}$, for $i=1,2$, agree, imply that the joint probability laws of the processes $(\widehat Z^i,\widehat W^i)$, for $i=1,2$, also agree.

We now proceed to the proof of identity \eqref{eq:BM_in_terms_of_Z}. From definition \eqref{eq:varsigma} of the matrix function $(\varsigma(z))$, the choice of the processes $\{\widehat N^i(t)\}_{t\geq 0}$, for $i=1,2$, and from equation \eqref{eq:Kimura_SDE}, we see that identity \eqref{eq:BM_in_terms_of_Z} is equivalent to
$$
\widehat W^i(t) = \int_0^t \mathbf{1}_{\{\widehat Z^i(s)\in S_{n,m}\}}  d\widehat W^i(s)\quad \widehat\PP^z_i\hbox{-a.s.},\quad\forall\, t\geq 0,\quad i=1,2.
$$
Thus identity \eqref{eq:BM_in_terms_of_Z} holds if and only if
\begin{equation*}
\int_0^t \mathbf{1}_{\{\widehat Z^i(s)\in \partial S_{n,m}\}}  d\widehat W^i(s) =0 \quad \widehat\PP^z_i\hbox{-a.s.},\quad\forall\, t\geq 0,\quad i=1,2.
\end{equation*}
The preceding equality is equivalent to proving that
$$
\EE_{\widehat \PP^z_i}\left[\int_0^t \mathbf{1}_{\{\widehat Z^i(s)\in \partial S_{n,m}\}}  \, ds\right] =0,\quad\forall\, t\geq 0,\quad i=1,2,
$$
but this clearly holds from the fact that the quantity defined in \eqref{eq:Novikov_cond} is finite. This completes the proof.
\end{proof}

We can now give the proof of Theorem \ref{thm:Uniqueness_SDE_singular} with the aid of Lemmas \ref{lem:Novikov_cond_singular} and \ref{lem:Uniqueness_joint_law}.

\begin{proof}[Proof of Theorem \ref{thm:Uniqueness_SDE_singular}]
Let $(Z^i,W^i)$, $(\Omega^i, \{\cF^i_t\}_{0\leq t\leq T},\cF^i,\PP_i^z)$, be two weak solutions to the Kimura stochastic differential equation with singular drift \eqref{eq:Kimura_SDE_singular}, satisfying the initial condition $Z^i(0)=z$, for $i=1,2$, where we assume that $z\in\bar S_{n,m}$. Assume that the two weak solutions satisfy the Markov property. Our goal is to show that the laws of the processes $\{Z^i(t)\}_{t\in [0,T]}$, are the same, for all $T>0$, for $i=1,2$. Let $\theta:S_{n,m}\rightarrow\RR^{n+m}$ be the vector field defined in \eqref{eq:Definition_theta}, and recall that the function $\theta(z)$ satisfies inequality \eqref{eq:Upper_bound_theta}. Lemma \ref{lem:Novikov_cond_singular} together with inequality \eqref{eq:Upper_bound_theta} shows that condition \eqref{eq:Novikov_cond_singular} holds, and so, \cite[Corollary 3.5.13]{KaratzasShreve1991} yields that the processes $\{M^i(t)\}_{0\leq t\leq T}$ defined by
\begin{equation}
\label{eq:RN_derivative}
M^i(t):=\hbox{exp}\left(-\int_0^t \theta(Z^i(s))\cdot\, dW^i(s)- \frac{1}{2}\int_0^t |\theta(Z^i(s))|^2\, ds\right),\quad \forall t \in [0,T],
\end{equation}
are $\PP^{z}_i$-martingale, for $i=1,2$. We can apply Girsanov's Theorem \cite[Theorem 3.5.1]{KaratzasShreve1991} to construct new probability measures, $\widehat\PP^{z}_i$, by letting
\begin{equation}
\label{eq:Definition_probability_measure_SDE}
\frac{d\widehat\PP^{z}_i}{d\PP^{z}_i} = M^i(T),\quad i=1,2.
\end{equation}
Then the process 
\begin{equation}
\label{eq:Widehat_BM}
\widehat W^i(t) := W^i(t)+\int_0^t \theta(Z^i(s))\, dt , \quad\forall t\in [0,T],
\end{equation}
is a Brownian motion with respect to the probability measure $\widehat\PP^z_i$, for $i=1,2$. Using definition \eqref{eq:Definition_theta} of the function $\theta(z)$, we see that by letting $\widehat Z^i(t):=  Z^i(t)$, for all $t\in [0,T]$, we obtain that the processes $\{\widehat Z^i(t),\widehat W^i(t)\}_{0\leq t\leq T}$, $(\Omega,\{\cF_t\}_{0\leq t\leq T}, \cF, \widehat\PP^z_i)$ are weak solutions to the standard Kimura stochastic differential equation \eqref{eq:Kimura_SDE}, with initial condition $\widehat Z^i(0)=z$, for $i=1,2$. From Lemma \ref{lem:Uniqueness_joint_law}, it follows that the joint law of the processes $\{\widehat Z^i(t), \widehat  W^i(t)\}_{0\leq t\leq T}$, for $i=1,2$, agree. From definitions \eqref{eq:RN_derivative} and \eqref{eq:Widehat_BM}, we have that
$$
M^i(t):=\hbox{exp}\left(-\int_0^t \theta(\widehat Z^i(s))\cdot\, d\widehat  W^i(s) + \frac{1}{2}\int_0^t |\theta(\widehat Z^i(s))|^2\, ds\right),\quad i=1,2,
$$ 
and so, the laws of the processes $\{M^1(t)\}_{0\leq t\leq T}$ and $\{M^2(t)\}_{0\leq t\leq T}$ also agree. Thus, it follows from \eqref{eq:Definition_probability_measure_SDE}, that the probability laws of the processes $\{Z^i(t)\}_{t\in [0,T]}$ are the same, for all $T>0$, for $i=1,2$. This concludes the proof.
\end{proof}

%%%%%%%%%%%%%%%%%%%%%%%%%%%%%%%%%%%%%%%%%%%%%%%%%%%%%%%%%%%%%%%%%%%%%%%%%%%%%%%
%
%                                bibliography
%
%%%%%%%%%%%%%%%%%%%%%%%%%%%%%%%%%%%%%%%%%%%%%%%%%%%%%%%%%%%%%%%%%%%%%%%%%%%%%%%

\bibliography{mfpde}
\bibliographystyle{amsplain}

\end{document}